\documentclass[leqno]{llncs}

\sloppy

\usepackage{amsfonts}
\usepackage{amsmath}
\usepackage{amssymb}
\usepackage{comment}
\usepackage{expdlist}
\usepackage[dvips]{graphicx}
\usepackage{subfigure}

\newcommand{\set}[1]{\left \{ #1 \right \}}                     
\newcommand{\setst}[2]{\left \{ #1 \mid #2 \right \}}           

\newcommand{\abs}[1]{| #1 |}
\newcommand{\val}[1]{||#1 ||}
\newcommand{\supp}[1]{\mathop{\textrm{supp}}\nolimits (#1)}

\renewcommand{\phi}{\varphi}
\renewcommand{\tilde}{\widetilde}

\renewcommand{\epsilon}{\varepsilon}

\providecommand{\supp}[1]{\mathop{\textrm{supp}}\nolimits \left(#1\right)}

\newcommand{\reffig}[1]{Fig.~\ref{fig:#1}}           
\newcommand{\refcor}[1]{Corollary~\ref{cor:#1}}      
\newcommand{\refth}[1]{Theorem~\ref{th:#1}}          
\newcommand{\reflm}[1]{Lemma~\ref{lm:#1}}            
\newcommand{\refsec}[1]{Section~\ref{sec:#1}}        

\DeclareMathOperator{\iso}{iso}
\DeclareMathOperator{\odd}{odd}
\DeclareMathOperator{\cluster}{cluster}

\renewenvironment{proof}{\par\noindent%
{\bf Proof.\par\nopagebreak}}{\unskip\nobreak\enskip$\square$\par\bigskip}

\begin{document}

\title{
    Triangle-Free 2-Matchings Revisited \\
}

\institute
{
    Moscow State University
}

\author
{
    Maxim Babenko
    \thanks{
        Email: \texttt{max@adde.math.msu.su}.
        Supported by RFBR grant 09-01-00709-a.
    },
    Alexey Gusakov
    \thanks{
        Email: \texttt{agusakov@gmail.com}
    },
    Ilya Razenshteyn
    \thanks{
        Email: \texttt{ilyaraz@gmail.com}
    }
}

\maketitle

\begin{abstract}
    A \emph{2-matching} in an undirected graph $G = (VG, EG)$ is
    a function $x \colon EG \to \set{0,1,2}$ such that
    for each node $v \in VG$ the sum of values $x(e)$
    on all edges $e$ incident to $v$ does not exceed~2.
    The \emph{size} of $x$ is the sum $\sum_e x(e)$.
    If $\setst{e \in EG}{x(e) \ne 0}$ contains no triangles then $x$ is called \emph{triangle-free}.

    Cornu\'ejols and Pulleyblank devised a combinatorial $O(mn)$-algorithm
    that finds a triangle free 2-matching of maximum size
    (hereinafter $n := \abs{VG}$, $m := \abs{EG}$)
    and also established a min-max theorem.

    We claim that this approach is, in fact, superfluous
    by demonstrating how their results may be obtained
    directly from the Edmonds--Gallai decomposition.
    Applying the algorithm of Micali and Vazirani we are able to find a maximum triangle-free
    2-matching in $O(m\sqrt{n})$-time.
    Also we give a short self-contained algorithmic proof of the min-max theorem.

    Next, we consider the case of regular graphs.
    It is well-known that every regular graph admits a perfect 2-matching.
    One can easily strengthen this result and prove that every $d$-regular graph
    (for $d \geq 3$) contains a perfect triangle-free 2-matching.
    We give the following algorithms for finding a perfect triangle-free 2-matching
    in a $d$-regular graph:
    an $O(n)$-algorithm for $d = 3$,
    an $O(m + n^{3/2})$-algorithm for $d = 2k$ ($k \ge 2$), and
    an $O(n^2)$-algorithm for $d = 2k + 1$ ($k \ge 2$).
\end{abstract}

\section{Introduction}
\label{sec:intro}

\subsection{Basic Notation and Definitions}

We shall use some standard graph-theoretic notation throughout the
paper. For an undirected graph $G$ we denote its sets of nodes and
edges by $VG$ and $EG$, respectively. For a directed graph we speak
of arcs rather than edges and denote the arc set of $G$ by $AG$. A
similar notation is used for paths, trees, and etc. Unless stated
otherwise, we do not allow loops and parallel edges or arcs in
graphs. An undirected graph is called \emph{$d$-regular} (or just
\emph{regular} if the value of $d$ is unimportant) if all degrees of its nodes are equal to $d$.
A subgraph of $G$ induced by a subset $U \subseteq VG$ is denoted by $G[U]$.

\subsection{Triangle-Free 2-Matchings}

\begin{definition}
    Given an undirected graph $G$, a \emph{2-matching} in~$G$ is
    a function $x \colon EG \to \set{0,1,2}$ such that
    for each node $v \in VG$ the sum of values $x(e)$
    on all edges $e$ incident to $v$ does not exceed~2.
\end{definition}

A natural optimization problem is to find,
given a graph $G$, a \emph{maximum} 2-matching $x$ in~$G$,
that is, a 2-matching of maximum \emph{size} $\val{x} := \sum_e x(e)$.
When $\val{x} = \abs{VG}$ we call $x$ \emph{perfect}.

If $\setst{e}{x(e) = 1}$ partitions into a collection of node-disjoint circuits of odd length
then $x$ is called \emph{basic}.
Applying a straightforward reduction one can easily see
that for each 2-matching there exists a basic 2-matching of the same or larger size
(see \cite[Theorem~1.1]{CP-80}).
From now on we shall only consider basic 2-matchings~$x$.

One may think of a basic 2-matching~$x$ as a collection of node disjoint \emph{double edges}
(each contributing~2 to $\val{x}$) and \emph{odd length circuits}
(where each edge of the latter contributes~1 to $\val{x}$).
See~\reffig{examples}(a) for an example.

Computing the maximum size $\nu_2(G)$ of a 2-matching in $G$ reduces to finding a maximum
matching in an auxiliary bipartite graph obtained by splitting the nodes of $G$.
Therefore, the problem is solvable in $O(m \sqrt{n})$-time with the help
of Hopcroft--Karp's algorithm \cite{HK-73} (hereinafter $n := \abs{VG}$, $m := \abs{EG}$).
A simple min-max relation is known (see \cite[Th.~6.1.4]{sch-03} for an equivalent statement):
\begin{theorem}
\label{th:min_max_2_matching}
    $\nu_2(G) := \min_{U \subseteq VG} \left(\abs{VG} + \abs{U} - \iso(G - U)\right)$.
\end{theorem}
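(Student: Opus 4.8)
The plan is to exploit the reduction to bipartite matching noted above and then combine König's theorem with an explicit combinatorial description of stable sets in the split graph. Concretely, form the bipartite graph $H$ by replacing each node $v \in VG$ with two copies $v', v''$ and each edge $uv \in EG$ with the two edges $u'v''$ and $u''v'$; its color classes are $\setst{v'}{v \in VG}$ and $\setst{v''}{v \in VG}$, so $\abs{VH} = 2n$. A matching $M$ of $H$ induces a 2-matching $x$ of $G$ via $x(uv) := \abs{M \cap \set{u'v'', u''v'}}$ with $\val{x} = \abs{M}$; conversely, decomposing (the multigraph of) a 2-matching into paths and circuits and mapping, in each oriented component $v_0 v_1 \dots$, the edge $v_i v_{i+1}$ to the edge $v_i' v_{i+1}''$ of $H$ yields a matching of the same cardinality. (This works for odd circuits as well, which is exactly why 2-matchings are allowed to contain them.) Hence $\nu_2(G) = \nu(H)$, the maximum matching size in $H$.

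By König's theorem, since $H$ is bipartite, $\nu(H)$ equals the minimum size of a vertex cover of $H$, i.e. $\abs{VH} - \alpha(H) = 2n - \alpha(H)$, where $\alpha(H)$ is the maximum size of a stable set in $H$.

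The heart of the argument is the identity
\begin{equation*}
    \alpha(H) = \max_{W \subseteq VG}\left(\abs{W} + \iso(G[W])\right) .
\end{equation*}
For ``$\le$'', let $I$ be a stable set of $H$ and partition $VG$ into the set $A$ of nodes with both copies in $I$, the set $B$ of nodes with exactly one copy in $I$, and the rest; then $\abs{I} = 2\abs{A} + \abs{B}$, and stability of $I$ forces every node of $A$ to be isolated in $G[A \cup B]$ (a neighbour there would produce an edge of $H$ with both endpoints in $I$). Hence $W := A \cup B$ satisfies $\abs{W} + \iso(G[W]) \ge (\abs{A} + \abs{B}) + \abs{A} = \abs{I}$. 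For ``$\ge$'', given $W$ set $W_0 := \iso(G[W])$ and $W_1 := W \setminus W_0$, and take $I := \setst{v', v''}{v \in W_0} \cup \setst{v'}{v \in W_1}$; nodes of $W_0$ have no neighbours in $W$, and inside $W_1$ only primed copies appear, so $I$ is stable of size $\abs{W} + \iso(G[W])$.

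Putting the pieces together, $\nu_2(G) = 2n - \max_W\left(\abs{W} + \iso(G[W])\right) = \min_{W \subseteq VG}\left(2n - \abs{W} - \iso(G[W])\right)$, and substituting $U := VG \setminus W$ (so that $2n - \abs{W} = n + \abs{U}$ and $G[W] = G - U$) gives precisely $\nu_2(G) = \min_{U \subseteq VG}\left(\abs{VG} + \abs{U} - \iso(G - U)\right)$. The only genuinely delicate step is the identity for $\alpha(H)$ — keeping track of the ``exactly one copy'' nodes $B$ correctly in both directions, and verifying that no edge of $H$ violates stability; the split construction and König's theorem are standard.
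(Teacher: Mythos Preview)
Your argument is correct. The bipartite split $H$ indeed satisfies $\nu_2(G)=\nu(H)$ (your path/circuit orientation handles double edges and odd circuits cleanly), König gives $\nu(H)=2n-\alpha(H)$, and your two-sided proof of $\alpha(H)=\max_W(\abs{W}+\iso(G[W]))$ is sound: in the ``$\le$'' direction the key observation that every node of $A$ is isolated in $G[A\cup B]$ follows because any neighbour in $A\cup B$ contributes at least one copy to $I$, creating a covered $H$-edge; in the ``$\ge$'' direction any $H$-edge meeting $I$ twice would force a $W$-neighbour of some $v\in W_0$. The final substitution $U=VG\setminus W$ is straightforward.

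As for comparison with the paper: there is nothing to compare against. The paper does not prove \refth{min_max_2_matching}; it merely quotes it as a known min--max relation and points to \cite[Th.~6.1.4]{sch-03} for an equivalent formulation. Your proof via the split bipartite graph and König's theorem is exactly the standard route (and essentially what the cited reference does), so you have supplied the argument the paper chose to omit.
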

Here $\nu_2(G)$ is the maximum size of a 2-matching in $G$,
$G - U$ denotes the graph obtained from $G$ by removing nodes $U$ (i.e. $G[VG - U]$)
and $\iso(H)$ stands for the number of isolated nodes in $H$.
The reader may refer to \cite[Ch.~30]{sch-03} and \cite[Ch.~6]{LP-86} for a survey.

\medskip

Let $\supp{x}$ denote $\setst{e \in EG}{x(e) \ne 0}$.
The following refinement of 2-matchings was studied
by Cornu\'ejols and Pulleyblank \cite{CP-80} in connection with the Hamilton cycle problem:
\begin{definition}
    Call a 2-matching $x$ \emph{triangle-free} if $\supp{x}$ contains no triangle.
\end{definition}

They investigated the problem of finding a maximum size triangle-free 2-matching,
devised a combinatorial algorithm, and gave an $O(n^3)$ estimate for its running time.
Their algorithm initially starts with $x := 0$ and then performs a sequence of \emph{augmentation steps}
each aiming to increase $\val{x}$. Totally, there are $O(n)$ steps and
a more careful analysis easily shows that the step can be implemented to run in $O(m)$ time.
Hence, in fact the running time of their algorithm is $O(mn)$.

\begin{figure}[t!]
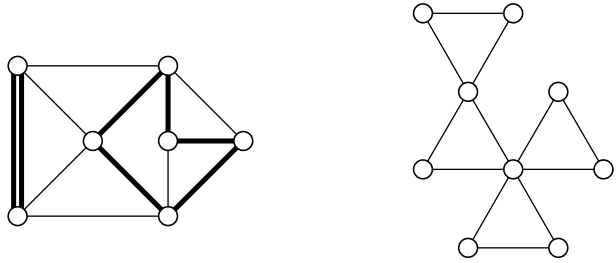

\label{fig:examples}
    \centering
    \includegraphics[trim=0 -12pt 0 0]{pics/example.1}%
    \hspace{2cm}
    \includegraphics{pics/cluster.1}%
    \caption{
        (a) A perfect basic 2-matching.
        (b) A triangle cluster.
    }
\end{figure}

The above algorithm also yields a min-max relation as a by-product.
Denote the maximum size of a triangle-free 2-matching in~$G$ by $\nu_2^3(G)$.
\begin{definition}
    A \emph{triangle cluster} is a connected graph whose edges partition into disjoint
    triangles such that any two triangles have at most one node in common
    and if such a node exists, it is an articulation point of the cluster.
    (See~\reffig{examples}(b) for an example.)
\end{definition}
Let $\cluster(H)$ be the number of the connected components of $H$ that are triangle clusters.
\begin{theorem}
\label{th:min_max_tf_2_matching}
    $\nu_2^3(G) := \min_{U \subseteq VG} \left(\abs{VG} + \abs{U} - \cluster(G - U)\right)$.
\end{theorem}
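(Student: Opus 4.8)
The plan is to prove the two inequalities separately: ``$\leq$'' by a direct deficiency count on $G$, and ``$\geq$'' by running an Edmonds-style augmentation in which triangle clusters play the role of blossoms (equivalently, by reducing to ordinary matching and invoking the Edmonds--Gallai decomposition).

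\emph{The inequality $\nu_2^3(G)\leq \abs{VG}+\abs{U}-\cluster(G-U)$.} The one nonroutine ingredient is the structural claim that \emph{a triangle cluster admits no perfect triangle-free 2-matching.} I would prove it by induction on the number of constituent triangles: a cluster with at least two triangles has a leaf triangle $T=\set{p,a,b}$ that meets the rest only at its articulation point $p$, so inside the cluster the only edges at $a$ or $b$ are $pa,pb,ab$; if $x$ were a perfect triangle-free 2-matching, then saturating $a$ and $b$ forces $x(ab)=2$ (whence deleting $\set{a,b}$ leaves a triangle cluster with one fewer triangle, still perfectly covered, contradicting induction --- or, if $T$ was the only triangle, leaves $p$ unsaturated) or $x(pa)=x(pb)=x(ab)=1$ (a triangle in $\supp{x}$). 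Granting this, take any triangle-free 2-matching $x$, put $\deg_x(v):=\sum_{e\ni v}x(e)\leq 2$, and for each triangle-cluster component $C$ of $G-U$ note that $x$ restricted to $C$ has size $\leq\abs{C}-1$; a local count of $x$-degrees inside $C$ then gives $x(E(C,U))+\sum_{v\in C}\left(2-\deg_x(v)\right)\geq 2$, where $x(E(C,U))$ is the total $x$-value on edges joining $C$ to $U$. Summing over the $\cluster(G-U)$ triangle-cluster components, bounding $\sum_C x(E(C,U))$ by the total capacity $2\abs{U}$ at nodes of $U$ and the remaining deficiency by $2\abs{VG}-2\val{x}$, yields $2\val{x}\leq 2\abs{VG}+2\abs{U}-2\cluster(G-U)$; now take $x$ maximum.

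\emph{The reverse inequality.} Here I would exhibit a suitable $U$ together with a maximum triangle-free 2-matching and check that equality holds in the count above. Start from $x=0$ and, while $x$ is not maximum, search for a structure raising $\val{x}$ that keeps $x$ a basic triangle-free 2-matching: this is an alternating forest grown from the unsaturated nodes in which a triangle cluster, once discovered, is contracted and thereafter treated like a single odd pseudonode, exactly as a blossom is in Edmonds' algorithm. When the search finds no augmentation, its final state exhibits a barrier set $U$ such that each component of $G-U$ is either a triangle cluster (a frozen, contracted part) or perfectly covered by $x$, the nodes of $U$ are saturated using only edges to triangle-cluster components, and $\cluster(G-U)=\abs{U}+(\abs{VG}-\val{x})$. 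These are precisely the equality conditions for the previous count, so $\val{x}=\abs{VG}+\abs{U}-\cluster(G-U)$. (Equivalently, following \cite{CP-80} one reduces the triangle-free 2-matching problem to ordinary matching in an auxiliary graph $H=H(G)$ with $O(m)$ nodes and edges; then $U$ is just the pull-back of the barrier $A$ in the Edmonds--Gallai decomposition of $H$, with the odd components of $H[D]$ pulling back to the triangle-cluster components of $G-U$, and running Micali--Vazirani on $H$ gives the $O(m\sqrt n)$ bound used in the algorithmic part of the paper.)

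\emph{Main obstacle.} The crux is getting the augmentation/contraction step right, i.e.\ proving that the barriers the search produces are \emph{exactly} triangle clusters. One half is that a triangle cluster behaves like a single odd node under augmentation --- it carries a triangle-free 2-matching covering all but any one prescribed node --- so contracting it is sound; the other half is that anything the search is forced to freeze must have the triangle-cluster structure (edges partitioning into disjoint triangles that pairwise meet in at most an articulation point). In the reduction formulation this is the same assertion that the factor-critical Edmonds--Gallai components of $H(G)$ are precisely the images of triangle clusters. Once this correspondence is in place, both the min-max relation and the algorithm fall out, and everything else --- the easy direction and the bookkeeping of constants in the reduction --- is routine.
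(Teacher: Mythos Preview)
Your easy direction is correct and is essentially the paper's Lemma~\ref{lm:min_max_inequality}, with the inductive argument that a triangle cluster has no perfect triangle-free 2-matching spelled out. For the reverse inequality, however, you take a genuinely different route. You propose either the direct Cornu\'ejols--Pulleyblank augmentation (shrinking triangle clusters as pseudonodes) or a reduction to ordinary matching in an auxiliary graph $H(G)$ followed by pulling back its Edmonds--Gallai barrier. Both are valid, but they are precisely the machinery this paper sets out to bypass. The paper instead applies the Edmonds--Gallai decomposition to $G$ \emph{itself}; the structural hinge is Lemma~\ref{lm:factor_critical_classification}: every factor-critical graph is either a triangle cluster or admits a perfect triangle-free 2-matching, so among the odd components of $G[D]$ the ``bad'' ones are exactly the triangle clusters. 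The certifying set $U$ is then not all of $A$ but a subset $L_A\subseteq A$ read off, via K\"onig--Egerv\'ary, from the bipartite graph between the bad components and $A$. Your ``main obstacle'' --- showing that whatever the search is forced to freeze must be a triangle cluster --- is thus replaced by the cleaner dichotomy for factor-critical graphs. What the paper's route buys is that no new augmentation theory or auxiliary-graph construction is needed: both the min-max formula and the $O(m\sqrt n)$ algorithm fall out of one call to Micali--Vazirani on $G$ plus Hopcroft--Karp on a small bipartite graph.
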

One may notice a close similarity between \refth{min_max_tf_2_matching} and
\refth{min_max_2_matching}.

\subsection{Our Contribution}

The goal of the present paper is to devise a faster algorithm
for constructing a maximum triangle-free 2-matching.
We give a number of results that improve the above-mentioned $O(mn)$ time bound.

Firstly, let $G$ be an arbitrary undirected graph.
We claim that the direct augmenting approach of Cornu\'ejols and Pulleyblank is, in fact, superfluous.
In \refsec{general} we show how one can compute a maximum triangle-free 2-matching with
the help of the Edmonds--Gallai decomposition \cite[Sec.~3.2]{LP-86}.
The resulting algorithm runs in $O(m\sqrt{n})$ time
(assuming that the maximum matching in $G$ is computed
by the algorithm of Micali and Vazirani \cite{MV-80}).
Also, this approach directly yields \refth{min_max_tf_2_matching}.

Secondly, there are some well-known results on matchings in regular graphs.
\begin{theorem}
\label{th:3_regular_perfect_matching}
    Every 3-regular bridgeless graph has a perfect matching.
\end{theorem}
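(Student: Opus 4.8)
The plan is to verify the condition of Tutte's classical $1$-factor theorem, which asserts that $G$ has a perfect matching if and only if, for every $S \subseteq VG$, the number $o(G-S)$ of connected components of $G-S$ with an odd number of nodes satisfies $o(G-S) \le \abs{S}$. (This statement is Petersen's theorem, and the argument below is the standard counting proof.) So I would fix an arbitrary $S \subseteq VG$ and bound $o(G-S)$ from above.

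The key step is a parity/counting argument applied to each odd component $C$ of $G-S$. Since $G$ is $3$-regular, $\sum_{v \in VC}\deg(v) = 3\abs{VC}$, and because every edge with both ends in $VC$ is counted twice in this sum while every edge from $VC$ to $S$ is counted once, the number of edges of $G$ joining $VC$ to $S$ equals $3\abs{VC}$ minus an even number, hence it is congruent to $\abs{VC} \equiv 1 \pmod 2$. In particular this number is odd. It cannot be $0$: an odd component with no edge to $S$ would be a connected $3$-regular graph of odd order, contradicting the handshake lemma. It cannot be $1$ either: a unique edge from $VC$ to the rest of $G$ would be a bridge, since every edge incident to $VC$ runs either inside $VC$ or to $S$ (an edge from $VC$ to another component of $G-S$ is impossible), so deleting that edge would sever the connected set $VC$ from $S$ — contradicting that $G$ is bridgeless. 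Therefore each odd component of $G-S$ sends at least $3$ edges into $S$.

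Finally I would add these up: summing over the odd components $C_1,\dots,C_k$ of $G-S$, the total number of edges between $\bigcup_i VC_i$ and $S$ is at least $3k$, while it is at most $\sum_{v\in S}\deg(v) = 3\abs{S}$. Hence $o(G-S) = k \le \abs{S}$. Since $S$ was arbitrary, Tutte's condition holds and $G$ has a perfect matching.

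\textbf{Main obstacle.} There is no deep difficulty; the only delicate point is the case analysis excluding $0$ and $1$ edges between an odd component and $S$, and this is exactly where the two hypotheses enter — $3$-regularity (via the handshake lemma) rules out $0$, and bridgelessness rules out $1$. Everything else is routine double counting.
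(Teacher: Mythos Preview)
Your argument is correct and is precisely the standard proof of Petersen's theorem via Tutte's $1$-factor condition; the parity count showing each odd component sends an odd number of edges to $S$, together with the exclusion of $0$ (handshake) and $1$ (bridgelessness), is handled cleanly.

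There is nothing to compare against: the paper does not prove \refth{3_regular_perfect_matching} at all. It merely quotes the statement as a classical result (``usually credited to Petersen'') alongside the bipartite regular perfect-matching theorem, and then moves on to use these as background for the discussion of $2$-matchings in regular graphs. So your write-up actually supplies more than the paper does here.
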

\begin{theorem}
\label{th:bp_regular_perfect_matching}
    Every regular bipartite graph has a perfect matching.
\end{theorem}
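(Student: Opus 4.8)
The plan is to deduce this from Hall's marriage theorem. Let $G$ be a $d$-regular bipartite graph with colour classes $A$ and $B$; we may assume $d \geq 1$, since $d = 0$ forces $VG = \emptyset$. The first step is to observe that $\abs{A} = \abs{B}$: counting the edges incident to $A$ and then to $B$ yields $d\,\abs{A} = \abs{EG} = d\,\abs{B}$.

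The second step is to verify Hall's condition on the $A$-side. Fix an arbitrary $S \subseteq A$ and let $N(S) \subseteq B$ be the set of nodes adjacent to some node of $S$. Every edge incident to a node of $S$ has its other endpoint in $N(S)$; there are exactly $d\,\abs{S}$ such edges, while the nodes of $N(S)$ can carry at most $d\,\abs{N(S)}$ of them. Hence $d\,\abs{S} \leq d\,\abs{N(S)}$, i.e. $\abs{N(S)} \geq \abs{S}$. By Hall's theorem there is a matching $M$ saturating every node of $A$, and since $\abs{M} = \abs{A} = \abs{B}$ the same $M$ saturates $B$ as well; thus $M$ is perfect.

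I do not expect a genuine obstacle here: the only point to keep in mind is that a perfect matching can exist only when $\abs{A} = \abs{B}$, which is precisely why the edge-counting observation must precede the application of Hall. If desired, one gets the stronger fact that $EG$ partitions into $d$ perfect matchings by iterating the argument — remove the perfect matching just produced to obtain a $(d-1)$-regular bipartite graph and induct on $d$ — but for the statement as given a single application of Hall's theorem suffices.
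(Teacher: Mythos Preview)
Your proof is correct and matches the paper's own treatment: the paper does not spell out a proof but simply remarks that the result is ``an easy consequence of Hall's condition,'' which is exactly the argument you give. (One minor quibble: $d=0$ does not force $VG=\emptyset$---a $0$-regular graph is just an edgeless graph---but the theorem is tacitly for $d\ge 1$, so this does not affect your argument.)
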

The former theorem is usually credited to Petersen
while the second one is an easy consequence of Hall's condition.

\begin{theorem}[Cole, Ost, Schirra \cite{COS-01}]
\label{th:linear_bp_regular_perfect_matching}
    There exists a linear time algorithm that finds a perfect matching
    in a regular bipartite graph.
\end{theorem}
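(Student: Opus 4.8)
The plan is to produce a perfect matching by a divide‑and‑conquer recursion that descends into a \emph{single} branch whose edge set shrinks by a constant factor at each level, so that the total work telescopes to $O(m)$. The guiding observation is that a subgraph of $G$ which spans all of $VG$ and is $1$‑regular is itself a perfect matching of $G$ (one exists by \refth{bp_regular_perfect_matching}); hence it suffices to drive the regular degree down to~$1$.

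\emph{Euler splitting.} The engine is the following elementary step. Let $H = (A \cup B, F)$ be a bipartite graph in which every node has even degree. Then every connected component of $H$ is Eulerian; traverse an Euler circuit of each component and colour its edges alternately with two colours. Since the component is bipartite every closed walk in it has even length, so the alternating colouring closes up consistently; and since consecutive edges of the circuit get different colours, each time the circuit enters and leaves a node it leaves behind one edge of each colour there. Hence a node of degree $2t$ ends with degree exactly $t$ in each colour class: the two classes $F_1, F_2$ both span $A \cup B$, every degree is halved, $\abs{F_1} = \abs{F_2} = \abs{F}/2$, and the splitting runs in $O(\abs{F})$ time.

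\emph{The recursion.} Given a $d$‑regular bipartite graph $G$: if $d = 1$, return $EG$. If $d$ is even, Euler‑split and recurse on one of the two $(d/2)$‑regular halves. If $d$ is odd with $d \ge 3$, invoke the linear‑time routine of \cite{COS-01} that finds a matching saturating every node of maximum degree; in a $d$‑regular graph this is a perfect matching, and since the current subgraph still spans $VG$ it is a perfect matching of the original $G$. In the even branch each recursive call works on a graph with half as many edges as its parent, so the even branch costs $O(m) + O(m/2) + \cdots = O(m)$, to which we add at most one call into the odd‑degree routine on a graph with $\le m$ edges; the total is $O(m)$.

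\emph{Main obstacle.} The even case — existence of Euler circuits, the halving recursion, the geometric sum — is routine. The delicate point, which is the technical core of \cite{COS-01}, is the odd‑degree step: the naive attempt (add dummy edges to make all degrees even, Euler‑split, discard the dummies) produces an \emph{irregular} graph, and the real content is a procedure that still extracts, within the same linear budget, a matching saturating all maximum‑degree nodes. I would regard establishing that subroutine — and, in particular, controlling how the dummy edges distribute across the two colour classes over all recursion levels — as the crux; granting it, the accounting above yields the claimed $O(m)$ bound.
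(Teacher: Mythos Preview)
The paper does not prove this theorem; it is quoted from \cite{COS-01} and used as a black box.

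As a stand-alone proof, your proposal has a genuine gap: it is circular. The even-degree branch via Euler splitting is standard and correct, but for odd $d \ge 3$ you write ``invoke the linear-time routine of \cite{COS-01} that finds a matching saturating every node of maximum degree''---and that routine \emph{is} the theorem under discussion. You concede this yourself in the final paragraph, calling the odd-degree subroutine ``the crux'' and proceeding only after ``granting it''. So what you have written is not a proof but a reduction of the theorem to its own hard case. Note, moreover, that the odd branch is unavoidable unless $d$ is a power of~$2$: if $d$ is odd to begin with (e.g.\ $d=3$) your recursion calls \cite{COS-01} on the original input immediately, and even for $d=6$ one halving step already lands you at $d=3$. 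All the actual content of \cite{COS-01}---controlling the dummy edges across the recursion so that a matching covering all maximum-degree vertices can still be extracted in linear total time---remains to be supplied.
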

\refth{bp_regular_perfect_matching} and \refth{linear_bp_regular_perfect_matching}
imply the following:
\begin{corollary}
\label{cor:regular_perfect_2_matching}
    Every regular graph has a perfect 2-matching.
    The latter 2-matching can be found in linear time.
\end{corollary}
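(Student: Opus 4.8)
The plan is to reduce the statement to the bipartite case covered by \refth{bp_regular_perfect_matching} and \refth{linear_bp_regular_perfect_matching}. Given a $d$-regular graph $G$, I would form the bipartite \emph{split graph} $H$: take two copies $v', v''$ of every node $v \in VG$, and for every edge $uv \in EG$ put both $u'v''$ and $u''v'$ into $EH$. Each node $v'$ (and each $v''$) is then incident to exactly $d$ edges of $H$, one per edge of $G$ at $v$; since $G$ has no loops there are no edges inside a colour class, and since $G$ has no parallel edges $H$ is simple. Hence $H$ is a simple $d$-regular bipartite graph.

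By \refth{bp_regular_perfect_matching}, $H$ has a perfect matching $M$. I would pull $M$ back to $G$ by setting $x(uv) := \abs{M \cap \set{u'v'', u''v'}}$ for each $uv \in EG$. At most two edges of $H$ lie over a single edge of $G$, so $x(e) \in \set{0,1,2}$; and for every node $v$ the quantity $\sum_{e \ni v} x(e)$ equals the number of $M$-edges at $v'$ plus the number at $v''$, namely $1 + 1 = 2$. Thus $x$ is a 2-matching that saturates every node. Summing $\sum_{e \ni v} x(e) = 2$ over all $v$ and dividing by $2$ (each edge is counted twice) gives $\val{x} = n = \abs{VG}$, so $x$ is perfect.

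For the complexity claim, $d$-regularity gives $m = dn/2$, so $H$ has $2n$ nodes and $dn = 2m$ edges. Constructing $H$, invoking the algorithm of \refth{linear_bp_regular_perfect_matching}, and converting $M$ back into $x$ are each linear in $\abs{VH} + \abs{EH} = O(m + n)$, which proves the second assertion.

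No genuine difficulty arises: every step is a routine check. The only points to watch are that the split construction really yields a \emph{simple} regular bipartite graph --- here the standing no-loops/no-parallel-edges convention for $G$ is used --- and that a perfect matching of $H$ corresponds to a \emph{perfect} (not merely maximum) 2-matching of $G$, which is precisely the degree count above.
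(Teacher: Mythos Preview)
Your argument is correct and is precisely the standard split construction the paper has in mind: the paper does not spell out a proof of the corollary but merely says it follows from \refth{bp_regular_perfect_matching} and \refth{linear_bp_regular_perfect_matching}, and it uses exactly your node-splitting reduction explicitly in the proof of \refth{regular_3_algo} (and a directed variant in \reflm{2regular_subgraph}). Nothing further is needed.
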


In \refsec{regular} we consider the analogues of \refcor{regular_perfect_2_matching}
with 2-matchings replaced by triangle-free 2-matchings.
We prove that every $d$-regular graph ($d \geq 3$) has a perfect triangle-free 2-matching.
This result gives a simple and natural strengthening to the non-algorithmic
part of \refcor{regular_perfect_2_matching}.

As for the complexity of finding a perfect 2-matching in a $d$-regular graph
it turns out heavily depending on $d$. The ultimate goal is
a linear time algorithm but we are only able to fulfill this task for $d = 3$.
The case of even $d$ ($d \ge 4)$ turns out reducible to $d = 4$, so
the problem is solvable in $O(m + n^{3/2})$ time by the use of the general algorithm
(since $m = O(n)$ for 4-regular graphs).
The case of odd $d$ ($d \ge 5$) is harder, we give an $O(n^2)$-time algorithm,
which improves the general time bound of $O(m \sqrt{n})$ when $m = \omega\left(n^{3/2}\right)$.

\section{General Graphs}
\label{sec:general}

\subsection{Factor-Critical Graphs, Matchings, and Decompositions}

We need several standard facts concerning maximum matchings (see~\cite[Ch.~3]{LP-86} for a survey).
For a graph $G$, let $\nu(G)$ denote the maximum size of a matching in $G$
and $\odd(H)$ be the number of connected components of $H$ with an odd number of vertices.

\begin{theorem}[Tutte--Berge]
\label{th:tutte_berge}
    $\nu(G) = \min_{U \subseteq VG} \frac12 \left(\abs{VG} + \abs{U} - \odd(G - U)\right)$.
\end{theorem}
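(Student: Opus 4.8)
This is the classical Tutte--Berge formula, and the plan is to prove the two inequalities separately, noting that the ``$\le$'' direction together with any single set $U$ attaining the bound yields equality, so the real content is to exhibit one good $U$.

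\emph{The inequality $\le$} is a counting argument. Fix any $U \subseteq VG$ and any matching $M$. In each odd component $K$ of $G - U$, the edges of $M$ having both endpoints in $K$ leave at least one vertex of $K$ uncovered (since $\abs{VK}$ is odd), and such a vertex is either $M$-exposed or matched by $M$ to a vertex of $U$. At most $\abs{U}$ vertices of $G$ are matched to $U$, so at least $\odd(G-U) - \abs{U}$ vertices are $M$-exposed, giving $\abs{VG} - 2\abs{M} \ge \odd(G-U) - \abs{U}$. Taking $M$ maximum and minimizing over $U$ yields $\nu(G) \le \min_U \tfrac12(\abs{VG} + \abs{U} - \odd(G-U))$.

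\emph{The inequality $\ge$} I would prove by induction on $n = \abs{VG}$, producing a set $U$ attaining the bound. If some vertex $v$ is covered by every maximum matching, then $\nu(G - v) = \nu(G) - 1$; applying the hypothesis to $G - v$ gives an optimal $U'$, and $U := U' \cup \set{v}$ is optimal for $G$ (here $G - U = (G-v) - U'$ while $\abs{U} = \abs{U'}+1$, so the right-hand side goes up by exactly $1$). Otherwise every vertex of $G$ is missed by some maximum matching. If $G$ is disconnected, apply the hypothesis to each component and take the union of the resulting sets, since $\nu$, $\abs{\cdot}$ and $\odd(\cdot)$ are all additive over components. If $G$ is connected, I claim $G$ is factor-critical, so $n$ is odd, $\nu(G) = (n-1)/2$, and $U := \emptyset$ attains the bound because $\odd(G) = 1$.

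\emph{The crux} is this last claim: a connected graph in which every vertex is missed by some maximum matching is factor-critical (Gallai's lemma). I would argue by contradiction. If not, some maximum matching $M$ misses two distinct vertices; choose such a pair $x, y$ at minimum distance in $G$. Since $xy \notin EG$ (else $M \cup \set{xy}$ is larger), $\operatorname{dist}(x,y) \ge 2$, so fix an internal vertex $z$ of a shortest $x$--$y$ path. Then $z$ is $M$-covered (otherwise $x, z$ form a closer exposed pair), while $z$ is missed by some maximum matching $N$; pick such $N$ maximizing $\abs{M \cap N}$. Now analyze $M \triangle N$, a disjoint union of alternating paths and even cycles, and the path components through $x$, $y$, $z$: a parity check forces the component through $z$ to be an even alternating path ending at an $M$-exposed vertex, and flipping a suitable component of $M \triangle N$ yields a maximum matching missing two vertices at distance strictly less than $\operatorname{dist}(x,y)$, contradicting minimality. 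The only delicate bookkeeping is choosing which component to flip and handling the degenerate case where the $z$-component happens to end at $x$ or $y$. (Alternatively one can take $U := A(G)$ from the Edmonds--Gallai decomposition, but proving its defining properties rests on the same alternating-path analysis.)
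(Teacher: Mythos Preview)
The paper does not prove this theorem at all: it is listed among the ``standard facts concerning maximum matchings'' with a reference to \cite[Ch.~3]{LP-86}, and is used only as background for the Edmonds--Gallai decomposition. So there is no ``paper's own proof'' to compare against.

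Your proposal is a correct outline of the standard inductive proof via Gallai's lemma. The easy direction is fine. For the hard direction, the case split (some vertex essential / every vertex inessential) and the reduction of the connected inessential case to factor-criticality are exactly the classical route. In your Gallai-lemma sketch, two points deserve one extra sentence each when you write it out: (i) the minimality of $\operatorname{dist}(x,y)$ must be taken over \emph{all} maximum matchings and all exposed pairs, not over pairs exposed by a single fixed $M$, since the contradiction you derive produces a closer exposed pair for the matching $N$ (or a flip of it), not for $M$; and (ii) the reason the $z$-component of $M\triangle N$ is an \emph{even} path is that an odd path starting at the $N$-exposed vertex $z$ with an $M$-edge would end at another $N$-exposed vertex and thus be $N$-augmenting, contradicting maximality of $N$. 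With those two clarifications made explicit, the argument is complete; your ``maximize $\abs{M\cap N}$'' trick then forces $M\triangle N$ to be a single path, which handles the ``degenerate'' endpoint cases cleanly.
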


\begin{definition}
    A graph $G$ is \emph{factor-critical} if for any $v \in VG$,
    $G - v$ admits a perfect matching.
\end{definition}

\begin{theorem}[Edmonds--Gallai]
\label{th:edmonds_gallai}
    Consider a graph $G$ and put
    $$
        \begin{array}{lll}
            D & := & \setst{v \in VG}{\mbox{there exists a maximum size matching missing $v$}},\\
            A & := & \setst{v \in VG}{\mbox{$v$ is a neighbor of $D$}},\\
            C & := & VG - (A \cup D).\\
        \end{array}
    $$
    Then $U := A$ achieves the minimum in the Tutte--Berge formula,
    and $D$ is the union of the odd connected components of $G[VG - A]$.
    Every connected component of $G[D]$ is factor-critical.
    Any maximum matching in $G$ induces a perfect matching in $G[C]$ and a matching in $G[VG - C]$
    that matches all nodes of $A$ to distinct connected components of $G[D]$.
\end{theorem}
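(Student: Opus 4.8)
\noindent\emph{Sketch of a proof plan.} This is the classical Edmonds--Gallai structure theorem, so I would build it on two standard ingredients that I would treat as prerequisites: \emph{Gallai's Lemma} --- a connected graph in which every vertex is missed by some maximum matching is factor-critical (hence has an odd number of vertices) --- and the elementary calculus of $M$-alternating paths and of symmetric differences of two maximum matchings. The first step is to fix an arbitrary maximum matching $M$ of $G$ and establish the \emph{reachability description} of the classes: a vertex $v$ lies in $D$ iff it is $M$-exposed or is the endpoint of an even-length $M$-alternating path starting at an $M$-exposed vertex; it lies in $A$ iff it is not in $D$ but is the endpoint of an odd-length such path; $C$ is the rest. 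One inclusion is immediate (given such an even alternating path to $v$, flipping $M$ along it produces a maximum matching missing $v$), and the other, together with the fact that the partition $VG = D \cup A \cup C$ does not depend on the choice of $M$, comes from inspecting the components (paths and even cycles) of $M \triangle M'$ for a second maximum matching $M'$.

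Granting this description, a number of the assertions follow routinely. No edge joins $C$ to $D$, by the very definition of $A$ as the set of all neighbours of $D$ outside $D$; hence each component of $G[VG - A]$ lies entirely in $C$ or entirely in $D$. Every vertex of $C$ is covered by every maximum matching (an exposed vertex would be in $D$) and is in fact matched to another vertex of $C$ --- a matching edge from $c \in C$ to some $a \in A$, appended to an odd $M$-alternating path reaching $a$, would give an even alternating path to $c$, so $c \in D$. Thus every maximum matching restricts to a \emph{perfect} matching of $G[C]$, so $\abs{C}$ is even and every component of $G[C]$ is even. In the same way, a maximum matching matches every vertex of $A$ into $D$ (it cannot remain in $A$ or reach $C$ without exhibiting an even alternating path to a vertex of $A$).

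The heart of the argument is the analysis of the components of $G[D]$, and I would carry it out by induction on $\abs{VG}$, using the easy fact that deleting a vertex of $D$ decreases the number of exposed vertices $\abs{VG} - 2\nu(G)$ by one while deleting a vertex outside $D$ increases it by one. The goal is to show that each component $K$ of $G[D]$ satisfies $D(K) = V(K)$ --- so that Gallai's Lemma makes $K$ factor-critical and $\abs{V(K)}$ odd --- and simultaneously that every maximum matching of $G$ restricts to a near-perfect matching on each such $K$, misses no vertex of $C$ or of $A$, and matches the vertices of $A$ to \emph{pairwise distinct} components of $G[D]$ (otherwise, using factor-criticality of the component and a counting argument, one rebuilds a maximum matching of $G$ that misses a vertex of $A$, a contradiction). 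With this in hand, $D$ is exactly the union of the odd components of $G[VG - A]$, and counting the exposed vertices of any maximum matching --- none in $C$, none in $A$, one per component of $G[D]$ not reached from $A$ --- gives $\abs{VG} - 2\nu(G) = \odd(G - A) - \abs{A}$, which is precisely the Tutte--Berge quantity at $U = A$; hence $A$ attains the minimum. The remaining structural claims in the last sentence of the theorem have all been assembled en route.

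The main obstacle, in my view, is exactly this last block: the reachability description of $D$ and its independence of $M$ (on which everything rests), and the delicate, mutually recursive verification that the components of $G[D]$ are factor-critical, that maximum matchings are near-perfect on them, and that $A$ is matched into distinct components --- statements that are easy to state but whose proofs lean on each other, so that they must be untangled by a careful induction rather than proved in isolation.
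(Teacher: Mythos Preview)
The paper does not prove this theorem at all: it is quoted as the classical Edmonds--Gallai structure theorem, with a reference to \cite[Ch.~3]{LP-86}, and is used as a black box in \refsec{general}. So there is no ``paper's own proof'' to compare against.

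That said, your sketch is a reasonable outline of one standard route to the result (the alternating-path/reachability description of $D$, $A$, $C$, followed by Gallai's Lemma and a counting argument). One remark: the step where you propose to show $D(K)=V(K)$ for each component $K$ of $G[D]$ ``by induction on $\abs{VG}$'' is a bit vague --- the cleaner textbook argument usually avoids induction on the whole graph and instead works directly inside~$K$: for any $v\in V(K)$, take a maximum matching of $G$ missing $v$ (which exists since $v\in D$), restrict it to $K$, and argue via alternating paths inside $K$ that this restriction is a maximum matching of $K$ missing $v$; then Gallai's Lemma applies to $K$ itself. Similarly, the ``pairwise distinct components'' claim is most transparently obtained by the direct count $\abs{VG}-2\nu(G)\ge \odd(G-A)-\abs{A}$ combined with the Tutte--Berge inequality in the other direction, rather than by a contradiction that ``rebuilds'' a matching. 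These are minor presentational points; the plan is sound and in line with the standard proofs in, e.g., Lov\'asz--Plummer.
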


We note that once a maximum matching~$M$ in $G$ is found, an Edmonds--Gallai decomposition
of $G$ can be constructed in linear time by running a search for an $M$-augmenting path.
Most algorithms that find~$M$ yield this decomposition as a by-product.
Also, the above augmenting path search may be adapted to produce
an \emph{odd ear decomposition} of every odd connected component of $G[VG - A]$:

\begin{definition}
    An \emph{ear decomposition} $G_0, G_1, \ldots, G_k = G$ of a graph $G$ is
    a sequence of graphs where $G_0$ consists of a single node,
    and for each $i = 0, \ldots, k - 1$, $G_{i+1}$ obtained from $G_i$ by adding
    the edges and the intermediate nodes of an ear.
    An \emph{ear} of $G_i$ is a path $P_i$ in $G_{i+1}$ such that
    the only nodes of $P_i$ belonging to $G_i$ are its (possibly coinciding) endpoints.
    An ear decomposition with all ears having an odd number of edges is called \emph{odd}.
\end{definition}

The next statement is widely-known and, in fact, comprises a part of the blossom-shrinking
approach to constructing a maximum matching.
\begin{lemma}
\label{lm:matching_all_but_one}
    Given an odd ear decomposition of a factor-critical graph $G$
    and a node~$v \in VG$ one can construct in linear time
    a matching $M$ in $G$ that misses exactly the node~$v$.
\end{lemma}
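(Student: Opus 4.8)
The plan is to induct on the number of ears $k$, proving the claim simultaneously for all members $G_0, G_1, \ldots, G_k = G$ of the given decomposition: for every $i$ and every $w \in VG_i$ there is a matching of $G_i$ missing exactly $w$, obtainable in time linear in the size of $G_i$. The base case $i = 0$ is trivial, since $G_0$ is a single vertex that the empty matching misses. For the step, write $G_{i+1} = G_i \cup P$ with $P = w_0 w_1 \cdots w_{2s+1}$ the added ear; here $w_0, w_{2s+1}$ are its (possibly coinciding) endpoints, which lie in $G_i$, and $w_1, \ldots, w_{2s}$ are the new interior vertices. Given a target $w \in VG_{i+1}$, I split into two cases according to whether $w$ lies in $G_i$ or is interior to $P$.

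If $w \in VG_i$, I invoke the inductive hypothesis for $G_i$ with target $w$ and then add the $s$ ear-edges $w_1w_2, w_3w_4, \ldots, w_{2s-1}w_{2s}$, which cover exactly the interior of $P$; the result misses exactly $w$. If $w = w_j$ with $1 \le j \le 2s$, I delete $w_j$ from $P$, splitting it into the two subpaths $w_0 \cdots w_{j-1}$ and $w_{j+1} \cdots w_{2s+1}$, whose orders $j$ and $2s+1-j$ have opposite parity. Taking a perfect matching of the even one and a near-perfect matching of the odd one that misses the relevant endpoint of $P$, I obtain a matching $N \subseteq EP$ covering every interior vertex of $P$ except $w_j$ together with exactly one endpoint $e$ of $P$; in the degenerate case $w_0 = w_{2s+1}$ the deletion instead leaves a single path of even order $2s$, whose perfect matching covers the common endpoint, and I set $e := w_0$. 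Finally I invoke the hypothesis for $G_i$ with target $e$ and take the union with $N$: since $e \in VG_i$ is covered by $N$ and missed by the $G_i$-matching, while the remaining new vertices are all covered, the union is a matching of $G_{i+1}$ missing exactly $w$.

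The overhead of each step beyond the single recursive call is $O(\abs{EP})$, so the running time telescopes to $O(\abs{EG} + \abs{VG})$, which is linear; in effect the procedure makes a single pass over the ears. What makes the induction close is precisely the decision, forced when $w$ is interior to the last ear, to free up one endpoint and then invoke the inductive claim for that endpoint on $G_i$ — exactly the form of near-perfect matching we are already proving to exist. The only point requiring real attention is the parity bookkeeping in the second case together with the closed-ear degeneracy $w_0 = w_{2s+1}$; the rest is routine. (The existence of all these near-perfect matchings is, of course, just factor-criticality of each $G_i$, a standard fact, but the argument above re-derives it constructively.)
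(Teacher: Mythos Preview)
Your argument is correct. The induction along the ear decomposition, with the two cases (target already in $G_i$ versus target interior to the new ear) and the parity bookkeeping on the two subpaths, is exactly the standard constructive proof; the closed-ear degeneracy is handled properly, and the running time does telescope to linear since each ear is visited once with $O(\abs{EP})$ work.

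As for comparison: the paper does not actually prove \reflm{matching_all_but_one}. It merely remarks that the statement ``is widely-known and, in fact, comprises a part of the blossom-shrinking approach to constructing a maximum matching'' and leaves it at that. Your write-up supplies precisely the folklore argument the paper is alluding to, so there is no methodological divergence --- you have filled in what the authors chose to omit.
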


Finally, we classify factor-critical graphs depending
on the existence of a perfect triangle-free 2-matching.
The proof of the next lemma is implicit in \cite{CP-83} and one can easily
turn it into an algorithm:

\begin{lemma}
\label{lm:factor_critical_classification}
    Each factor-critical graph $G$ is either a triangle cluster
    or has a perfect triangle-free 2-matching $x$.
    Moreover, if an odd ear decomposition of $G$ is known
    then these cases can be distinguished and $x$ (if exists) can be constructed
    in linear time.
\end{lemma}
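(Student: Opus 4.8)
The plan is to proceed by induction on the number of triangles in an odd ear decomposition of $G$, exploiting the recursive structure of triangle clusters. First I would fix an odd ear decomposition $G_0, G_1, \ldots, G_k = G$ and examine the ears one at a time. Observe that a factor-critical graph on a single node is trivially a (degenerate) triangle cluster, and that adding an ear of length $\ge 5$ creates an odd circuit long enough to absorb into a triangle-free 2-matching; the only dangerous ears are those of length $1$ (a single edge joining two existing nodes) and length $3$. So the crux is understanding how short ears interact. I would argue that if at any stage the current graph $G_i$ already carries a perfect triangle-free 2-matching, then so does $G_{i+1}$: an ear $P_i$ with endpoints $s, t$ and an odd number of internal nodes can be matched by alternately doubling its edges, leaving $s$ and $t$ to be covered by the 2-matching on $G_i$ (one uses that $G_i$ is factor-critical, hence removing any one node still leaves a perfect matching, to reroute the cover of $s$ and $t$ as needed). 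Thus the property "has a perfect triangle-free 2-matching" is monotone along the ear decomposition, and $G$ fails it only if \emph{every} $G_i$ fails it.

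Next I would analyze the structure forced by this failure. If $G$ is not a triangle cluster, some ear, say the $j$-th, is the first one whose addition makes $G_j$ fail to be a triangle cluster while $G_{j-1}$ is one. I would do a short case analysis on the length of $P_j$ and how its endpoints sit inside the cluster $G_{j-1}$: if $P_j$ has length $\ge 5$, or length $3$ with endpoints not both inside a common triangle, or length $1$ with endpoints in "generic" position, one can exhibit an explicit perfect triangle-free 2-matching on $G_j$ by combining a near-perfect matching of $G_{j-1}$ (obtained from \reflm{matching_all_but_one} applied to an endpoint of $P_j$) with a doubling pattern along $P_j$, possibly rerouting through one triangle of the cluster to break it into a $5$-circuit. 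The remaining configurations are exactly those in which $G_j$ is again a triangle cluster, contradicting the choice of $j$. Hence if $G$ is not a triangle cluster it has a perfect triangle-free 2-matching; conversely a triangle cluster plainly has none, since every edge lies in a triangle and a perfect 2-matching must use, at each articulation point, edges from only one incident triangle, which forces a full triangle somewhere.

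For the algorithmic claim, everything above is constructive: scanning the $O(n)$ ears, each of total length $O(n)$ but with $\sum |P_i| = O(m)$, and maintaining a current perfect triangle-free 2-matching (or the certificate that every prefix is a triangle cluster) costs $O(1)$ or $O(|P_i|)$ work per ear beyond the one invocation of \reflm{matching_all_but_one}, which is itself linear. So the whole classification runs in linear time. The main obstacle I anticipate is the case analysis in the previous paragraph: pinning down precisely which placements of a length-$1$ or length-$3$ ear keep $G_j$ a triangle cluster, and checking that in every other placement the rerouting argument genuinely produces a triangle-free support (in particular that no new triangle is accidentally created in the 2-matching). This is where the paper cites \cite{CP-83} for the underlying combinatorial facts, so I would lean on that analysis rather than re-derive it from scratch.
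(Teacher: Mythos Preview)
The paper does not actually prove this lemma: it states that the argument is ``implicit in \cite{CP-83}'' and can be made algorithmic, and then moves on. So there is no in-paper proof to compare your proposal against; your write-up is essentially an attempt to reconstruct the cited argument, and the ear-by-ear induction you outline is the natural route and almost certainly the one Cornu\'ejols and Pulleyblank use.

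Two small corrections to your sketch. First, an odd ear with $2\ell+1$ edges has an \emph{even} number $2\ell$ of internal nodes, not odd; your doubling construction (pair up consecutive internal edges) then covers all internal nodes without touching $s$ or $t$, so the monotonicity step goes through with no rerouting and no appeal to factor-criticality of $G_i$. The parenthetical about using a near-perfect matching of $G_i$ is unnecessary here. Second, and as you yourself flag, all the real content lives in the case analysis at the first index $j$ where $G_{j-1}$ is a triangle cluster but $G_j$ is not: one must check each way a length-$1$ or length-$3$ ear can land on a cluster and, in the non-cluster outcomes, exhibit an explicit triangle-free perfect 2-matching (often a $5$-cycle threading through the new ear and one old triangle). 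This is exactly what \cite{CP-83} supplies, and since the present paper already defers to that reference, your deferral is in line with it.
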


\subsection{The Algorithm}

For the sake of completeness, we first establish an upper
bound on the size of a triangle-free 2-matching.
\begin{lemma}
\label{lm:min_max_inequality}
    For each $U \subseteq VG$, $\nu_2^3(G) \leq \abs{VG} + \abs{U} - \cluster(G - U)$.
\end{lemma}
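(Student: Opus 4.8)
The plan is to bound the size of an arbitrary triangle-free 2-matching $x$ by a counting argument that isolates the nodes of $U$ and then, after deleting $U$, accounts for how much of $x$ can survive on each connected component of $G - U$, using the triangle-free condition only on those components that happen to be triangle clusters.

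First I would recall that we may assume $x$ is basic, so $x$ decomposes into node-disjoint double edges and node-disjoint odd circuits, and $\val{x} = \sum_v (\text{degree of }v\text{ in }\supp{x}\text{ counted with }x\text{-weight})/1$ — more usefully, $\val{x}$ equals the number of nodes covered by $x$ minus the number of odd circuits of $x$ (a double edge covers $2$ nodes and contributes $2$; an odd circuit on $2k+1$ nodes covers $2k+1$ nodes and contributes $2k+1$, but if we instead write $\val{x} = |VG| - (\text{uncovered nodes}) - (\text{number of odd circuits})$ we get a clean bookkeeping identity). So the first step is the identity
\[
    \val{x} = \abs{VG} - \#\{\text{nodes missed by }x\} - \#\{\text{odd circuits of }x\}.
\]
Next I would delete $U$: every edge of $\supp{x}$ with an endpoint in $U$ is destroyed, and this can free up at most $2\abs{U}$ units of $x$-weight, hence $\val{x} \le \val{x'} + 2\abs{U}$, where $x'$ is the restriction of $x$ to $G - U$. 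Now $x'$ is a triangle-free 2-matching in $G - U$, so it suffices to prove $\val{x'} \le \abs{V(G-U)} - \cluster(G-U) + (\text{a correction that telescopes with the }\abs{U}\text{ term})$. Working per component $K$ of $G - U$: a triangle-free 2-matching on $K$ has size at most $\abs{VK}$, and the point is that if $K$ is a triangle cluster then it has size at most $\abs{VK} - 1$, since any odd circuit decomposition avoiding triangles must leave at least one node uncovered or break into at least one odd circuit — this is exactly where triangle-freeness bites.

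The main obstacle is precisely this last claim: showing that a triangle cluster $K$ admits no \emph{perfect} triangle-free 2-matching, i.e. $\nu_2^3(K) \le \abs{VK} - 1$. I would argue by induction on the number of triangles in $K$, peeling off a leaf triangle $T$ at an articulation point $a$; a perfect triangle-free 2-matching of $K$ restricted to $T$ cannot use all three edges of $T$ (triangle-free), so at most two edges of $T$ are used, which forces the two non-articulation nodes of $T$ to be covered via $a$, saturating $a$ and leaving the rest of $K$ to be perfectly covered without touching $a$ — but $a$ is an articulation point, so the rest is a smaller triangle cluster that by induction cannot be perfectly covered, contradiction. The base case is a single triangle, which obviously has no perfect triangle-free 2-matching (one would have to use all three edges). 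Assembling the per-component bounds, $\val{x'} \le \sum_K \abs{VK} - \cluster(G-U) = \abs{V(G-U)} - \cluster(G-U)$, and combining with $\val{x} \le \val{x'} + 2\abs{U}$ and $\abs{V(G-U)} = \abs{VG} - \abs{U}$ gives $\val{x} \le \abs{VG} + \abs{U} - \cluster(G-U)$, as required.

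One delicate point I would be careful about: when I delete $U$ and claim the loss is at most $2\abs{U}$, I must make sure I am not double-counting an edge whose \emph{both} endpoints lie in $U$; assigning each destroyed edge's $x$-weight to one of its endpoints in $U$ and noting each node of $U$ carries total $x$-weight at most $2$ handles this cleanly. The rest is routine bookkeeping.
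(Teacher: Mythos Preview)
Your overall route is exactly the paper's: lose at most $2\abs{U}$ when deleting $U$ (the paper phrases this as ``removing a single node decreases $\nu_2^3$ by at most $2$''), then bound $\nu_2^3(G-U)$ componentwise using that a triangle cluster admits no perfect triangle-free $2$-matching. The paper simply asserts this last fact; you attempt to prove it, which is fine, but the inductive step is mis-argued.

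With $T=\{a,b,c\}$ a leaf triangle at the articulation point $a$, the nodes $b$ and $c$ have \emph{all} their incident edges inside $T$. If $x$ is perfect then each of $b,c$ needs $x$-degree exactly $2$. Ruling out the triangle $x(\{a,b\})=x(\{b,c\})=x(\{a,c\})=1$, the only feasible assignment is $x(\{b,c\})=2$ and $x(\{a,b\})=x(\{a,c\})=0$: for instance if $x(\{a,b\})=2$ then $a$ and $b$ are saturated and $c$ cannot reach $x$-degree $2$. So $b$ and $c$ are \emph{not} ``covered via $a$''; rather $a$ is untouched by $T$, and $x$ restricted to $K-\{b,c\}$ --- a triangle cluster on $k-1$ triangles still containing $a$ --- would have to be perfect, contradicting the inductive hypothesis. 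The conclusion survives, but your sentence ``forces the two non-articulation nodes of $T$ to be covered via $a$, saturating $a$'' describes the branch that is actually impossible.

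A minor side remark: your displayed identity $\val{x}=\abs{VG}-\#\{\text{missed nodes}\}-\#\{\text{odd circuits}\}$ is off (for a basic $2$-matching one has simply $\val{x}=\abs{VG}-\#\{\text{missed nodes}\}$, since an odd circuit on $2k+1$ nodes contributes $2k+1$). You never use it, so it does no damage, but drop it.
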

\begin{proof}
    Removing a single node from a graph $G$ may decrease $\nu_2^3(G)$ by
    at most~2. Hence, $\nu_2^3(G) \le \nu_2^3(G - U) + 2\abs{U}$.
    Also, $\nu_2^3(G - U) \le (\abs{VG} - \abs{U}) - \cluster(G - U)$
    since every connected component of $G - U$ that is a triangle cluster
    lacks a perfect triangle-free 2-matching.
    Combining these inequalities, one gets the desired result.
\end{proof}
The next theorem both gives an efficient algorithm a self-contained
proof of the min-max formula.
\begin{theorem}
\label{th:general_algo}
    A maximum triangle-tree 2-matching can be found in $O(m \sqrt{n})$ time.
\end{theorem}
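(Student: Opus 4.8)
The plan is to reduce the maximum triangle-free 2-matching problem to a maximum matching computation and then assemble the answer from the Edmonds--Gallai decomposition. First I would build an auxiliary graph $G'$ by taking a maximum matching $M$ of $G$ and the associated Edmonds--Gallai decomposition $(D, A, C)$ of $G$. The key observation is that a basic 2-matching corresponds to a choice of odd circuits and double edges; double edges behave like matching edges, and an odd circuit on an odd component of $G[D]$ can be produced from an odd ear decomposition. So the structure of triangle-free 2-matchings is governed by the same $D, A, C$ partition: the part $C$ carries a perfect matching (hence doubled edges contributing $2$ per node), $A$ must be ``charged'' to components of $G[D]$ via the matching guaranteed by \refth{edmonds_gallai}, and each component of $G[D]$ must then be filled in with a triangle-free 2-matching missing the charged node (or perfect if uncharged).

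The main steps, in order, are: (1) compute $M$ and the Edmonds--Gallai decomposition in $O(m\sqrt n)$ time via Micali--Vazirani \cite{MV-80}, together with odd ear decompositions of every odd component of $G[D]$; (2) on $C$, double the edges of the perfect matching induced there, contributing $2\abs{C}$ to the size; (3) for each component $K$ of $G[D]$ that is matched by $M$ to some node $a \in A$, use \reflm{factor_critical_classification} to decide whether $K$ is a triangle cluster. If $K$ is not a triangle cluster, take its perfect triangle-free 2-matching, double the edge $\{a, v\}$ for a suitable $v \in VK$ chosen so that the rest of $K$ still admits a triangle-free 2-matching missing $v$ — here \reflm{matching_all_but_one} combined with \reflm{factor_critical_classification} gives the near-perfect triangle-free 2-matching of $K - v$; if $K$ \emph{is} a triangle cluster, we cannot double into it for free, so we instead leave $a$ and one node of $K$ unsaturated — this is exactly the deficiency that \reflm{min_max_inequality} predicts. (4) For components $K$ of $G[D]$ left unmatched by $M$ (at most $\abs{A}$ components are matched, the rest are ``extra''), take a perfect triangle-free 2-matching of $K$ if $K$ is not a triangle cluster, otherwise a triangle-free 2-matching missing a single node. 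Finally, add up: the size achieved equals $\abs{VG} + \abs{A} - (\text{number of triangle-cluster components of } G[D] \text{ that ``absorb'' the deficit})$, and I would argue this matches $\abs{VG} + \abs{A} - \cluster(G - A)$ by checking that the components of $G - A = G[D] \cup G[C]$ that are triangle clusters are precisely the triangle-cluster components of $G[D]$ (components of $G[C]$ have perfect matchings, hence cannot be odd triangle clusters). By \reflm{min_max_inequality} this is optimal and $U := A$ realizes the min-max formula.

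The hard part will be step (3): carefully handling the interface between a node $a \in A$ and a matched non-triangle-cluster component $K$. One must verify that doubling $\{a, v\}$ and simultaneously requiring a triangle-free 2-matching of $K - v$ are compatible, and that the edge $\{a,v\}$ can always be chosen so that $v$ is a valid ``missed'' node — this is where the odd ear decomposition and \reflm{matching_all_but_one}/\reflm{factor_critical_classification} must be invoked with the right $v$, and where one must confirm no triangle is accidentally created at $a$ (since $a$'s only saturated edge is the doubled one, $a$ lies on no circuit, so no triangle through $a$ appears). A second, more bookkeeping-heavy subtlety is proving the counting identity equating the constructed size with the claimed min-max value; this amounts to the observation that every triangle-cluster component of $G[D]$ contributes a deficiency of exactly $1$ to both sides, and needs the fact (from \reflm{factor_critical_classification}) that a factor-critical graph is triangle-free-2-matchable iff it is not a triangle cluster. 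Everything else — the matching computation, the ear decompositions, assembling the pieces — is linear-time post-processing, so the $O(m\sqrt n)$ bound is dominated by step (1).
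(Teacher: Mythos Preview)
Your plan has a genuine gap: using the Edmonds--Gallai matching $M$ directly to pair $A$ with components of $G[D]$, and taking $U := A$ as the certificate, both fail in general. The matching $M$ guaranteed by \refth{edmonds_gallai} saturates $A$, but it need not match the \emph{bad} (triangle-cluster) components preferentially; if $M$ sends some $a \in A$ to a good component while a bad component goes unmatched, your construction incurs an avoidable deficiency. Concretely, take $A = \{a_1, a_2\}$, $C = \emptyset$, and three factor-critical components: $K_1$ a triangle (bad), $K_2, K_3$ pentagons (good), with $a_1$ adjacent to $K_1, K_2$ and $a_2$ adjacent to $K_2, K_3$. Here $n = 15$ and one checks $\nu_2^3(G) = 15$. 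But $\cluster(G - A) = 1$, so your proposed bound $\abs{VG} + \abs{A} - \cluster(G - A) = 16$ is slack, and $U := A$ does \emph{not} realize the min-max. (Also, your step~(3) is confused in the bad case: when a triangle cluster $K$ is matched to $a$, you \emph{can} double the edge $\{a,v\}$ and then double a near-perfect matching on $K$ missing $v$; this saturates everything and creates no triangle, so there is no reason to leave $a$ unsaturated.)

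What the paper does, and what your outline is missing, is a second matching step: build the bipartite graph $H$ between the components of $G[D]$ and $A$, restrict to the subgraph $H'$ on bad components only, compute a maximum matching $M_{H'}$ there, and then extend it to a matching $M_H$ in $H$ saturating $A$. This guarantees that the maximum possible number $k_{\rm bad}$ of bad components are matched, so the constructed $x$ has size $\abs{VG} - (n_{\rm bad} - k_{\rm bad})$. Optimality is then certified not by $U := A$ but by K\"onig--Egerv\'ary applied to $H'$: a minimum vertex cover $L = L_A \cup L_D$ of size $k_{\rm bad}$ yields the tight choice $U := L_A$ in \reflm{min_max_inequality}. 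In the example above $L_A = \{a_1\}$ works, giving the correct bound $15 + 1 - 1 = 15$. Without this auxiliary bipartite matching and the K\"onig--Egerv\'ary step, your argument cannot close the gap between the constructed $x$ and the upper bound.
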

\begin{proof}
    Construct an Edmonds--Gallai decomposition of $G$, call it $(D, A, C)$,
    and consider odd ear decompositions of the connected components of $G[D]$.
    As indicated earlier, the complexity of this step is dominated by
    finding a maximum matching~$M$ in $G$. The latter can be done in $O(m \sqrt{n})$ time (see~\cite{MV-80}).

    The matching $M$ induces a perfect matching $M_C$ in $G[C]$. We turn $M_C$ into double
    edges in the desired triangle-free 2-matching $x$ by putting $x(e) := 2$ for each $e \in M_C$.

    Next, we build a bipartite graph $H$. The nodes in the upper part of $H$ correspond
    to the components of $G[D]$, the nodes in the lower part of $H$ are just the nodes of~$A$.
    There is an edge between a component $C$ and a node $v$ in $H$ if and only if there is at least
    one edge between $C$ and $v$ in $G$.
    Let us call the components that are triangle clusters \emph{bad} and the others \emph{good}.
    Consider another bipartite graph $H'$ formed from $H$ by dropping all
    nodes (in the upper part) corresponding to good components.

    The algorithm finds a maximum matching $M_{H'}$ in $H'$ and then augments
    it to a maximum matching $M_H$ in $H$. This is done in $O(m \sqrt{n})$
    time using Hopcroft--Karp algorithm~\cite{HK-73}.
    It is well-known that an augmentation can only increase the set of matched
    nodes, hence every bad component matched by $M_{H'}$ is also matched by $M_H$ and vice versa.
    From the properties of Edmonds--Gallai decomposition it follows that $M_H$
    matches all nodes in $A$.

    Each edge $e \in M_H$ corresponds to an edge $\tilde e \in EG$,
    we put $x(\tilde e) := 2$.

    Finally, we deal with the components of $G[D]$.
    Let $C$ be a component that is matched (in~$M_H$) by, say, an edge $e_C \in M_H$.
    As earlier, let $\tilde e_C$ be the preimage of $e_C$ in $G$.
    Since $C$ is factor-critical, there exists a matching $M_C$ in $C$
    that misses exactly the node in $C$ covered by $\tilde e_C$.
    We find $M_C$ in linear time (see~\reflm{matching_all_but_one}) and put $x(e) := 2$ for each $e \in M_C$.

    As for the unmatched components, we consider good and bad ones separately.
    If an unmatched component $C$ is good, we apply \reflm{factor_critical_classification}
    to find (in linear time) and add to $x$ a perfect triangle-free 2-matching in $C$.
    If $C$ is bad, we employ \reflm{matching_all_but_one} and
    find (in linear time) a matching $M_C$ in $C$ that covers all the nodes
    expect for an arbitrary chosen one and set $x(e) := 2$ for each $e \in M_C$.

    The running time of the above procedure is dominated by constructing
    the Edmonds--Gallai decomposition of $G$ and finding matchings $M_{H'}$ and $M_H$.
    Clearly, it is $O(m \sqrt{n})$.

    It remains to prove that $x$ is a maximum triangle-free 2-matching.
    Let $n_{\rm bad}$ be the number of bad components in $G[D]$.
    Among these components, let $k_{\rm bad}$ be matched by $M_{H'}$ (and, hence, by $M_H$).
    Then $\val{x} = \abs{VG} - (n_{\rm bad} - k_{\rm bad})$.
    From K\"onig--Egervary theorem (see, e.g., \cite{LP-86}) there exists a vertex cover $L$ in $H'$
    of cardinality $k_{\rm bad}$ (i.e. a subset $L \subseteq VH'$ such that
    each edge in $H'$ is incident to at least one node in $L$).
    Put $L = L_A \cup L_D$, where $L_A$ are the nodes of $L$ belonging to the lower part of $H$
    and $L_D$ are the nodes from the upper part.
    The graph $G - L_A$ contains at least $n_{\rm bad} - \abs{L_D}$ components that
    are triangle clusters.
    (They correspond to the uncovered nodes in the upper part of~$H'$.
    Indeed, these components are only connected to $L_A$ in the lower part.)
    Hence, putting $U := L_A$ in \reflm{min_max_inequality} one gets
    $\nu_2^3(G) \le \abs{VG} + \abs{L_A} - (n_{\rm bad} - \abs{L_D}) =
    \abs{VG} + \abs{L} - n_{\rm bad} =
    \abs{VG} - (n_{\rm bad} - k_{\rm bad}) = \val{x}$.
    Therefore, $x$ is a maximum triangle-free 2-matching, as claimed.
\end{proof}

\section{Regular Graphs}
\label{sec:regular}

\subsection{Existence of a Perfect Triangle-Free 2-Matching}

\begin{theorem}
\label{th:existence_in_almost_regular}
    Let $G$ be a graph with $n - q$ nodes of degree $d$ and $q$ nodes of degree $d - 1$ ($d \ge 3$).
    Then, there exists a triangle-free 2-matching in $G$ of size at least $n - q/d$.
\end{theorem}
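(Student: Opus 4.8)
The plan is to reduce the statement to the min–max formula of Theorem~\ref{th:min_max_tf_2_matching} (equivalently, to Lemma~\ref{lm:min_max_inequality} together with the existence of a tight~$U$), and to argue that for a graph that is ``almost $d$-regular'' no choice of~$U$ can force the size of a maximum triangle-free 2-matching below $n - q/d$. Concretely, I would fix an arbitrary $U \subseteq VG$ and estimate $\abs{VG} + \abs{U} - \cluster(G - U)$ from below, showing it is always at least $n - q/d$; by Theorem~\ref{th:min_max_tf_2_matching} (or by applying the inequality direction of Lemma~\ref{lm:min_max_inequality} together with the fact that the minimum is attained) this yields $\nu_2^3(G) \ge n - q/d$.

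The heart of the argument is a counting bound on $\cluster(G - U)$ in terms of $\abs{U}$ and the degree deficiency. Let $K_1, \dots, K_t$ be the connected components of $G - U$ that are triangle clusters, and write $n_i := \abs{VK_i}$; note each $n_i$ is odd (a triangle cluster has an odd number of vertices, being an edge-disjoint union of triangles glued at articulation points), so in particular $n_i \ge 1$, and if $n_i = 1$ then $K_i$ is an isolated node. I would count edges of $G$ leaving each $K_i$: inside $K_i$ every vertex has degree at most $n_i - 1$ within $K_i$, but in a triangle cluster on $n_i$ vertices the total internal degree is exactly $2 \cdot (\text{number of triangles}) \cdot \ldots$; more usefully, since $G$ has minimum degree $d-1$ and all but $q$ vertices have degree exactly $d$, the number of edges from $K_i$ to $U$ is at least $d \cdot n_i - (\text{internal degree sum}) - q_i$, where $q_i$ counts the deficient vertices inside~$K_i$. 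For a triangle cluster the internal degree sum is at most $d \cdot n_i - $ (something growing like $n_i$); the key elementary fact I expect to use is that a triangle cluster on $n_i$ vertices, for $d \ge 3$, must send at least $n_i + 1 - (\text{slack})$ edges to the rest of the graph — I would make this precise by showing each cluster $K_i$ sends at least one more edge to $U$ than a naive bound, so that summing over~$i$ gives $d t \le (\text{edges from clusters to } U) + q \le d\abs{U} + q$ roughly, hence $t \le \abs{U} + q/d$. Plugging this into $\abs{VG} + \abs{U} - \cluster(G-U) \ge n + \abs{U} - (\abs{U} + q/d) = n - q/d$ finishes the proof.

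The main obstacle will be pinning down the precise edge-counting inequality for a triangle cluster: I need to show that a triangle cluster component $K_i$ of $G - U$ contributes enough boundary edges to $U$. The delicate point is that a triangle cluster can itself be large and ``dense'' near its articulation points, so a crude degree count is not immediately tight; the right move is to exploit that within a triangle cluster every vertex lies on few triangles relative to $d$ (since $d \ge 3$ the cluster structure is forced to be ``tree-like'' in how triangles are glued), which caps the internal degree of each vertex and forces the surplus degree $d$ (or $d-1$) to be spent on edges leaving $K_i$. I would handle the boundary case $d = 3$ separately if needed, since there a triangle cluster vertex not at an articulation point has internal degree exactly $2$ and thus exactly one edge to the outside, making the count exact. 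Once the per-cluster bound $(\text{edges from } K_i \text{ to } U) \ge d - 3 \cdot(\ldots)$ is established, the rest is summation and rearrangement, which I will not grind through here.
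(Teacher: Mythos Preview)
Your approach is exactly the paper's: fix $U$, count edges from each triangle-cluster component $K_i$ to $U$, sum, and conclude $t \le \abs{U} + q/d$, then invoke \refth{min_max_tf_2_matching}. What you flag as the ``delicate point'' is in fact a one-line count, and your worry about articulation points and density is misplaced. A triangle cluster built from $k$ edge-disjoint triangles has exactly $2k+1$ vertices and exactly $3k$ edges (the first triangle contributes three vertices, each subsequent one adds two). So if $q_i$ of the vertices of $K_i$ have degree $d-1$ in $G$, the number of $G$-edges leaving $K_i$ is
\[
(2k+1)d - q_i - 2\cdot 3k \;=\; d + (2d-6)k - q_i \;\ge\; d - q_i,
\]
the last inequality using only $d \ge 3$. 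Summing gives at least $\sum_i (d - q_i) \ge td - q$ edges incident to $U$; since each vertex of $U$ has degree at most $d$, $td - q \le d\,\abs{U}$, i.e.\ $t - \abs{U} \le q/d$. No per-vertex or articulation-point analysis is needed---the global vertex/edge count of the cluster does all the work, and your tentative ``at least $n_i + 1$'' shape for the boundary is not the right target (the correct per-cluster lower bound is $d - q_i$, independent of $n_i$).
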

\begin{proof}
    Consider an arbitrary subset $U \subseteq VG$.
    Put $t := \cluster(G - U)$ and let $C_1, \ldots, C_t$ be the triangle cluster components of $G - U$.
    Fix an arbitrary component $H := C_i$ and let $k$ be the number of triangles in $H$.
    One has $\abs{VH} = 2k + 1$.
    Each node of $H$ is incident to either $d$ or $d - 1$ edges.
    Let $q_i$ denote the number of nodes of degree $d - 1$ in $H$.
    Since $\abs{EH} = 3k$ it follows that $(2k + 1)d - 6k - q_i = d + (2d - 6) k  - q_i \ge d - q_i$
    edges of $G$ connect $H$ to~$U$. Totally, the nodes in~$U$
    have at least $\sum_{i = 1}^t (d - q_i) \geq td - q$ incident edges.
    On the other hand, each node of $U$ has the degree of at most $d$, hence $td - q \le \abs{U} d$
    therefore $t - \abs{U} \le q/d$.
    By the min-max formula (see \refth{min_max_tf_2_matching}) this implies the
    desired bound.
\end{proof}

\begin{corollary}
\label{cor:existence_in_regular}
    Every $d$-regular graph ($d \ge 3$) has a perfect triangle-free 2-matching.
\end{corollary}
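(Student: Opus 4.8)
The plan is to derive this immediately from \refth{existence_in_almost_regular} by taking $q = 0$. If $G$ is $d$-regular with $d \ge 3$, then every node has degree $d$, so in the notation of the theorem we have $q = 0$ and the theorem guarantees a triangle-free 2-matching of size at least $n - q/d = n$. Since the size of any 2-matching in an $n$-node graph cannot exceed $n$ (each node contributes at most $2$ to $2\val{x} = \sum_v \sum_{e \ni v} x(e)$, so $\val{x} \le n$), the 2-matching produced has size exactly $n$, i.e. it is perfect.

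The only point worth spelling out is why the upper bound $\nu_2^3(G) \le n$ holds; this is routine but should be stated. Summing $x(e)$ over all edges incident to a fixed node $v$ gives at most $2$ by the definition of a 2-matching, and summing this over all $v \in VG$ counts each edge value $x(e)$ exactly twice, so $2\val{x} \le 2n$, giving $\val{x} \le n$. Hence a triangle-free 2-matching of size $\ge n$ must have size exactly $n$ and is therefore perfect.

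There is essentially no obstacle here: the corollary is a one-line specialization of the preceding theorem plus the trivial size bound. The substantive work was already done in proving \refth{existence_in_almost_regular}, where the degree-counting argument on triangle-cluster components of $G - U$ combined with the min-max formula \refth{min_max_tf_2_matching} did the heavy lifting. So the proof reduces to: invoke \refth{existence_in_almost_regular} with $q = 0$, note the matching size is $\ge n$, and observe that $n$ is also an upper bound.

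\begin{proof}
    Apply \refth{existence_in_almost_regular} with $q = 0$: since $G$ is $d$-regular
    with $d \ge 3$, it has $n$ nodes of degree $d$ and none of degree $d - 1$, so
    there is a triangle-free 2-matching $x$ in $G$ with $\val{x} \ge n - 0/d = n$.
    On the other hand, summing the 2-matching constraint over all nodes,
    $2 \val{x} = \sum_{v \in VG} \sum_{e \ni v} x(e) \le 2 \abs{VG} = 2n$,
    so $\val{x} \le n$. Hence $\val{x} = n$, i.e.\ $x$ is perfect.
\end{proof}
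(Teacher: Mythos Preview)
Your proof is correct and matches the paper's intent: the corollary is stated without proof in the paper, as it is the immediate specialization of \refth{existence_in_almost_regular} to $q = 0$. Your added justification of the trivial upper bound $\val{x} \le n$ is fine and makes the one-line derivation fully explicit.
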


\subsection{Cubic graphs}

For $d = 3$ we speed up the general algorithm ultimately as follows:
\begin{theorem}
\label{th:regular_3_algo}
    A a perfect triangle-free 2-matching in a 3-regular graph can be found in linear time.
\end{theorem}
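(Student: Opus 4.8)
The plan is to exploit the special structure of $3$-regular graphs to avoid the expensive matching computations used in the general algorithm of \refth{general_algo}. First I would recall that by \refth{3_regular_perfect_matching} a $3$-regular \emph{bridgeless} graph has a perfect matching, and more generally every $3$-regular graph decomposes along its bridges into bridgeless pieces. So the first step is to find all bridges of $G$ in linear time (a standard DFS), and consider the components obtained after deleting the bridges. Each such component, together with the half-edges where bridges were attached, has all degrees equal to $3$ except for the endpoints of bridges, which have degree $2$.

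The second step is to produce a perfect matching in each bridgeless $3$-regular piece in linear time. Rather than invoking a general matching algorithm, I would use the fact that $3$-regular bridgeless graphs admit a perfect matching that can be built greedily/constructively: one can orient the argument through Petersen's theorem and the $2$-factor it yields (a $3$-regular bridgeless graph has a proper $3$-edge-coloring in many cases, but not always; instead use that deleting a perfect matching leaves a $2$-factor, i.e.\ a disjoint union of cycles). Concretely, I would argue that in linear time one obtains a spanning subgraph that is a disjoint union of cycles (a $2$-factor) covering all the degree-$3$ vertices, by repeatedly pulling out cycles; the complementary edges then form the matching. The bridges themselves are then added with value $2$ to $x$, and each matching edge also gets value $2$.

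The third step converts this into a \emph{triangle-free} perfect $2$-matching. A perfect $2$-matching consisting of double edges (a perfect matching with each edge used twice) is automatically triangle-free, since $\supp{x}$ is then a disjoint union of single edges and contains no triangle at all. Combined with the bridges (also double edges), we obtain a perfect triangle-free $2$-matching of $G$ directly — provided the pieces glue correctly at the bridge endpoints, i.e.\ each bridge endpoint, having degree $2$ inside its piece, must be left \emph{unmatched} by the internal matching so that its two units of capacity are consumed by the bridge. This is exactly the point where care is needed: I would apply the matching construction to the piece after \emph{contracting} or otherwise accounting for the forced bridge edges, so that the required vertices are deliberately exposed; since in a bridgeless $3$-regular graph any single vertex can be left exposed by a near-perfect matching (it is factor-critical if it has an odd number of vertices, and more generally one can always avoid one prescribed vertex by a maximum matching when Tutte--Berge permits), and the bridge structure guarantees parity works out, this can be arranged. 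The main obstacle is precisely this bookkeeping at bridge endpoints — ensuring that the local matchings in the bridgeless blocks can simultaneously expose all the right vertices — and verifying it all runs in $O(n)$; once that is handled, the triangle-freeness is free because $x$ uses only double edges.
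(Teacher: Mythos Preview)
Your approach has a genuine gap at its core. You assert that a perfect matching (equivalently, a $2$-factor) in a bridgeless $3$-regular graph can be found in linear time by ``repeatedly pulling out cycles,'' but this greedy procedure does not produce a $2$-factor. Take $K_4$: removing any triangle leaves three edges meeting at a common vertex, so no further cycle can be extracted and the triangle you removed already misses a vertex. In general, finding a $2$-factor in a cubic graph is exactly as hard as finding a perfect matching (each is the complement of the other), and no linear-time algorithm for this is known. The subsequent bookkeeping at bridge endpoints --- arranging that every $2$-edge-connected block admits a matching exposing precisely its attachment vertices --- is also left as a hope (``parity works out'') rather than an argument, but the fatal issue is the missing linear-time matching primitive itself.

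The paper sidesteps the need for a perfect matching in $G$ altogether. It first contracts an inclusionwise maximal family of vertex-disjoint triangles (found in linear time by local search), obtaining another $3$-regular multigraph $G'$. It then finds a perfect \emph{$2$-matching} in $G'$ --- not a perfect matching --- via the bipartite double cover of $G'$, which is $3$-regular bipartite, so the Cole--Ost--Schirra algorithm (\refth{linear_bp_regular_perfect_matching}) yields a perfect matching there in linear time. The resulting basic $2$-matching in $G'$ is expanded through the contracted triangles: an odd circuit through a composite node gains two edges and has length at least~$5$, while a double edge at a composite node is completed by a second double edge inside the triangle. Any triangle surviving in $\supp{x}$ would then avoid all contracted triangles, contradicting maximality. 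Thus triangle-freeness comes from the contraction step, not from restricting to doubled matchings as you propose.
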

\begin{proof}
    Consider a 3-regular graph $G$.
    First, we find an arbitrary inclusion-wise maximal collection
    of node-disjoint triangles $\Delta_1, \ldots, \Delta_k$ in $G$.
    This is done in linear time by performing a local search at each node $v \in VG$.
    Next, we contract $\Delta_1, \ldots, \Delta_k$ into \emph{composite} nodes $z_1, \ldots, z_k$
    and obtain another 3-regular graph $G'$
    (note that $G'$ may contain multiple parallel edges).

    Construct a bipartite graph $H'$ from $G'$ as follows.
    Every node $v \in VG'$ is split into a pair of nodes $v^1$ and $v^2$.
    Every edge $\set{u, v} \in EG'$ generates edges $\set{u^1, v^2}$ and $\set{v^1, u^2}$ in $H'$.
    There is a natural surjective many-to-one correspondence between perfect matchings in $H'$
    and perfect 2-matchings in $G'$.
    Applying the algorithm of Cole, Ost and Schirra \cite{COS-01} to $H'$ we construct
    a perfect 2-matching $x'$ in $G'$ in linear time.
    As usual, we assume that $x'$ is basic, in particular $x'$ contains no circuit of length~2
    (i.e. $\supp{x'}$ contains no pair of parallel edges).

    Our final goal is to expand $x'$ into a perfect triangle-free 2-matching $x$ in $G$.
    The latter is done as follows. Consider an arbitrary composite node $z_i$
    obtained by contracting $\Delta_i$ in $G$.
    Suppose that a double edge $e$ of $x'$ is incident to $z_i$ in~$G'$.
    We keep the preimage of $e$ as a double edge of $x$ and add another double edge
    connecting the remaining pair of nodes in $\Delta_i$. See~\reffig{3regular}(a).

    Next, suppose that $x'$ contains an odd-length circuit $C'$ passing through $z_i$.
    Then, we expand $z_i$ to $\Delta_i$ and insert an additional pair of edges to $C'$.
    Note that the length of the resulting circuit $C$ is odd and is no less than~5.
    See~\reffig{3regular}(b).

    Clearly, the resulting 2-matching $x$ is perfect. But why is it triangle-free?
    For sake of contradiction, suppose that $\Delta$ is a triangle in $\supp{x}$.
    Then, $\Delta$ is an odd circuit in~$x'$ and no node of $\Delta$ is composite.
    Hence, $\Delta$ is a triangle disjoint from $\Delta_1, \ldots, \Delta_k$~--- a contradiction.
\end{proof}

\begin{figure}[t]
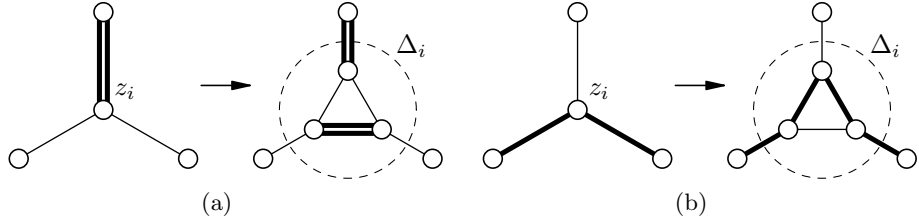

\label{fig:3regular}
    \centering
    \subfigure[]{
        \includegraphics{pics/3regular.1}%
    }
    \hspace{0.2cm}
    \subfigure[]{
        \includegraphics{pics/3regular.2}%
    }
    \caption{
        Uncontraction of $z_i$.
    }
\end{figure}

Combining the above connection between triangle-free 2-matchings in $G$
and 2-matchings in $G'$ with the result of Voorhoeve \cite{V-79} one can prove the following:
\begin{theorem}
    There exists a constant $c > 1$ such that every 3-regular graph $G$ contains at least $c^n$ perfect triangle-free 2-matchings.
\end{theorem}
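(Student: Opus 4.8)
The plan is to combine two facts: the many-to-one correspondence between perfect triangle-free 2-matchings in $G$ and perfect 2-matchings in the contracted graph $G'$ (established implicitly in the proof of \refth{regular_3_algo}), and Voorhoeve's counting theorem, which states that a 3-regular bipartite graph on $2N$ nodes has at least $\left(\frac{4}{3}\right)^N$ perfect matchings (in fact Voorhoeve's bound is $\left(\frac{(k-1)^{k-1}}{k^{k-2}}\right)^N$ for $k$-regular, which for $k=3$ gives $(4/3)^N$). First I would fix a 3-regular graph $G$ on $n$ nodes, take an inclusion-wise maximal collection of node-disjoint triangles $\Delta_1, \ldots, \Delta_k$, contract them to get the 3-regular (multi)graph $G'$ on $n' = n - 2k$ nodes, and then split $G'$ into the bipartite double cover $H'$ on $2n'$ nodes as in the cited proof.

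Next I would lower-bound the number of perfect 2-matchings of $G'$. Every perfect matching of $H'$ projects to a perfect 2-matching of $G'$, and this projection is at most $2^{n'/2}$-to-one: a perfect 2-matching of $G'$ consists of double edges and node-disjoint odd circuits, a double edge has a unique preimage in $H'$, and each odd circuit of length $\ell$ has exactly $2$ preimages, so the number of preimages of a fixed 2-matching is $2^{(\text{number of odd circuits})} \le 2^{n'/2}$. Hence $G'$ has at least $(4/3)^{n'}/2^{n'/2} = \left(\frac{4}{3\sqrt{2}}\right)^{n'}$ perfect 2-matchings. Unfortunately $\frac{4}{3\sqrt 2} \approx 0.943 < 1$, so this crude bound is not yet enough; the fix is to observe that in a simple 3-regular graph the number of disjoint odd circuits in any perfect 2-matching is much smaller than $n'/2$ — a circuit has length at least $3$, actually at least $4$ once we know (from basicness) there are no length-$2$ circuits, so the preimage count is at most $2^{n'/4}$, giving at least $\left(\frac{4}{3\cdot 2^{1/4}}\right)^{n'}$, and $\frac{4}{3\cdot 2^{1/4}} \approx 1.121 > 1$. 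Even more carefully, every node not on an odd circuit lies on a double edge, so if there are $s$ odd circuits of total length $L \ge 3s$ then $2^s \le 2^{L/3}\le 2^{n'/3}$, whichever bookkeeping is cleanest.

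Then I would use the expansion map: distinct perfect 2-matchings of $G'$ expand to distinct perfect triangle-free 2-matchings of $G$ (the expansion rule in \refth{regular_3_algo} — replacing a composite node carrying a double edge by two double edges in the triangle, and inserting a pair of triangle-edges into an odd circuit through a composite node — is reversible, since contracting each $\Delta_i$ recovers $x'$). So the number of perfect triangle-free 2-matchings of $G$ is at least the number of perfect 2-matchings of $G'$, which is at least $\left(\frac{4}{3\cdot 2^{1/4}}\right)^{n'}$ with $n' = n - 2k \ge n/3$ (since $3k \le $ something — actually $n' \ge n/3$ needs justification: each contracted triangle removes $2$ nodes and $k \le n/3$, so $n' = n - 2k \ge n - 2n/3 = n/3$). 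This yields at least $\left(\frac{4}{3\cdot 2^{1/4}}\right)^{n/3} = c^n$ with $c = \left(\frac{4}{3\cdot 2^{1/4}}\right)^{1/3} > 1$.

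The main obstacle, and the only place requiring care, is the exponential loss in the two lossy steps — the many-to-one projection $H' \to G'$ and the contraction $n \mapsto n'$ — which together shrink Voorhoeve's constant $4/3$ considerably; I expect the bulk of the write-up to be the bookkeeping that shows the surviving constant is still strictly above $1$. One must also handle the edge case where $G$ contains no triangle at all ($k = 0$, $G' = G$), in which case the statement follows directly from Voorhoeve (every perfect matching of $H$ projects to a perfect 2-matching of $G = $ triangle-free since $G$ itself has no triangle, and the projection is at most $2^{n/2}$-to-one with the same refinement as above). A minor subtlety: $G'$ may have parallel edges, but Voorhoeve's theorem for regular bipartite (multi)graphs still applies to $H'$, and basicness of the chosen 2-matchings (no length-$2$ circuits) is exactly what we used to push the circuit length up to $\ge 3$, so no parallel-edge issue survives into the count.
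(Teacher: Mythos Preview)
The paper does not actually prove this theorem; it only asserts that the result follows by combining the $G\leftrightarrow G'$ correspondence from \refth{regular_3_algo} with Voorhoeve's theorem. Your outline follows that hint faithfully, and the injective expansion step (perfect 2-matchings of $G'$ $\hookrightarrow$ perfect triangle-free 2-matchings of $G$) is correct. The gap is in the preimage bound for the projection $\mathrm{PM}(H')\to\{\text{perfect 2-matchings of }G'\}$.

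You claim this projection is at most $2^{n'/3}$-to-one because ``circuits have length at least $3$''. That would be true if $G'$ were simple, but $G'$ is a multigraph: contracting $\Delta_i=\{a,b,c\}$ creates parallel edges whenever two of the external neighbours of $a,b,c$ coincide or lie in a common $\Delta_j$. A (non-basic) perfect 2-matching $x'$ can then contain $2$-cycles on such parallel pairs, each contributing a factor $2$ to the fibre while occupying only two nodes. Your appeal to ``basicness'' does not rescue this: the projection sends \emph{every} perfect matching of $H'$ to a 2-matching of $G'$, basic or not, and restricting the target to basic 2-matchings would forfeit the Voorhoeve lower bound on the source.

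Concretely, let $G$ be a disjoint union of $n/4$ copies of $K_4$. A maximal family of disjoint triangles has one triangle per copy, and $G'$ is then a disjoint union of $n/4$ ``theta graphs'' (two nodes joined by three parallel edges), so $n'=n/2$. Choosing two of the three parallel edges in each component gives a 2-matching with $n'/2$ cycles of length~$2$, hence fibre size $2^{n'/2}$, strictly larger than your claimed $2^{n'/3}$. The bound you would obtain is $(4/3)^{n'}/2^{n'/2}=(8/9)^{n/4}<1$, which proves nothing. The same failure occurs for $G$ a disjoint union of triangular prisms, where contracting both triangles in each prism gives $n'=n/3$ and again $G'$ consists of theta graphs. (The theorem is still true in these cases --- there are $6^{n/4}$, respectively $10^{n/6}$, perfect triangle-free 2-matchings --- but your argument does not establish it.)

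To close the gap one needs either a direct exponential lower bound on the number of perfect 2-matchings of a $3$-regular \emph{multigraph} (not merely on the permanent of its adjacency matrix), or a refined analysis showing that whenever $G'$ has enough parallel edges to inflate the fibre sizes, $\mathrm{perm}(A_{G'})$ exceeds the Voorhoeve bound $(4/3)^{n'}$ by a compensating factor.
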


\subsection{Even-degree graphs}

To find a perfect triangle-free 2-matching in a $2k$-regular graph $G$
($k \ge 2$) we replace it by a 4-regular spanning subgraph and then apply the general algorithm.

\begin{lemma}
\label{lm:2regular_subgraph}
    For each $2k$-regular ($k \ge 1$) graph $G$ there exists and can be found
    in linear time a 2-regular spanning subgraph.
\end{lemma}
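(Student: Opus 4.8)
The statement: every $2k$-regular graph $G$ has a 2-regular spanning subgraph, findable in linear time. The classical tool here is the Eulerian orientation trick. The plan is to decompose the edges of $G$ using an Euler tour and then use bipartiteness.

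First I would observe that since every vertex has even degree $2k$, each connected component of $G$ is Eulerian, so in linear time I can compute an Euler tour of each component and thereby orient every edge so that each vertex has in-degree $k$ and out-degree $k$. Next, form the bipartite graph $B$ with vertex set $\{v^+ : v \in VG\} \cup \{v^- : v \in VG\}$, placing an edge $\{u^+, v^-\}$ for each arc $(u,v)$ of the orientation. Then $B$ is $k$-regular and bipartite, so by \refth{bp_regular_perfect_matching} it has a perfect matching, and by \refth{linear_bp_regular_perfect_matching} one such matching $M$ can be found in linear time. Pulling $M$ back to $G$: each matched edge $\{u^+,v^-\}$ corresponds to one oriented edge of $G$, and because $M$ is perfect, every vertex $v$ is covered exactly once as $v^+$ and exactly once as $v^-$, i.e. exactly one selected edge leaves $v$ and exactly one enters $v$. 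Hence the set of selected edges, viewed undirected, gives each vertex of $G$ degree exactly 2 — a 2-regular spanning subgraph.

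One technical point to handle: $G$ may contain parallel edges (indeed in the application to cubic graphs of \refth{regular_3_algo} the contracted graph $G'$ does), so $B$ may likewise have parallel edges; but \refth{linear_bp_regular_perfect_matching} of Cole, Ost and Schirra applies to regular bipartite multigraphs as well, so this causes no difficulty — we simply keep parallel arcs as distinct edges of $B$. Another point is that the Euler-tour orientation must be done component by component; since the total work over all components is linear in $n + m = O(n)$ (for $2k$-regular $G$ with $k$ fixed, $m = kn$), the overall running time remains linear.

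The main obstacle, such as it is, is purely one of bookkeeping rather than mathematics: one must be careful that the orientation obtained from the Euler tour really is balanced at every vertex (in-degree equals out-degree equals $k$), which is immediate from the fact that a closed walk enters and leaves each vertex the same number of times, and that the linear-time matching algorithm of \refth{linear_bp_regular_perfect_matching} is being invoked on a graph of the right size. No subtle case analysis is needed; the result is essentially a packaging of three standard facts (Eulerian decomposition, the bipartite double cover of an orientation, and the Cole–Ost–Schirra matching algorithm).
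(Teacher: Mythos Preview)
Your proposal is correct and essentially identical to the paper's proof: the paper also obtains a balanced orientation (by decomposing $EG$ into edge-disjoint circuits and orienting each one, rather than one Euler tour per component, but this yields the same in-degree~$=$~out-degree~$=k$ property), builds the same bipartite split graph, and invokes \refth{linear_bp_regular_perfect_matching} to get the perfect matching that pulls back to a 2-regular spanning subgraph. The only difference is cosmetic notation ($v^1,v^2$ in place of your $v^+,v^-$).
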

\begin{proof}
    Since the degrees of all nodes in $G$ are even, $EG$
    decomposes into a collection of edge-disjoint circuits. This decomposition takes linear time.
    For each circuit $C$ from the above decomposition we choose
    an arbitrary direction and traverse $C$ in this direction turning
    undirected edges into directed arcs.
    Let $\overrightarrow G$ denote the resulting digraph. For each node $v$ exactly
    $k$ arcs of $\overrightarrow G$ enter $v$ and exactly $k$ arcs leave $v$.

    Next, we construct a bipartite graph $H$ from $\overrightarrow G$ as follows:
    each node $v \in \overrightarrow G$ generates a pair of nodes $v^1, v^2 \in VH$,
    each arc $(u,v) \in A\overrightarrow G$ generates an edge $\set{u^1,v^2} \in EH$.
    The graph $H$ is $k$-regular and, hence, contains a perfect matching~$M$
    (which, by \refth{linear_bp_regular_perfect_matching}, can be found in linear time).
    Each edge of $M$ corresponds to an arc of $\overrightarrow G$ and, therefore, to
    an edge of $G$. Clearly, the set of the latter edges forms a 2-regular
    spanning subgraph of $G$.
\end{proof}

\begin{theorem}
    A perfect triangle-free 2-matching in a $d$-regular graph ($d = 2k$, $k \ge 2$)
    can be found in $O(m + n^{3/2})$ time.
\end{theorem}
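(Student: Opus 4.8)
The plan is to reduce the $2k$-regular case ($k \ge 2$) to the $4$-regular case and then invoke \refth{general_algo}. The point of aiming for degree exactly $4$ is twofold: on one hand, \refcor{existence_in_regular} guarantees that a $4$-regular graph has a perfect triangle-free $2$-matching, so the \emph{maximum} triangle-free $2$-matching returned by the general algorithm will automatically be perfect; on the other hand, a simple $4$-regular graph on $n$ nodes has only $2n$ edges, so running the $O(m\sqrt n)$ algorithm on it costs merely $O(n^{3/2})$. Everything else will have to be done in $O(m)$ time.

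First I would build a $4$-regular spanning subgraph $G'$ of $G$. One is tempted to peel off $2$-factors one at a time via \reflm{2regular_subgraph} until the degree drops to $4$, but each peel is linear in the number of edges still present, and there are $k-2$ of them, so this costs $\Theta(k^2 n)$, which is too slow when $k$ is large. Instead I would carry out the bipartite reduction of \reflm{2regular_subgraph} only once: fix an Eulerian orientation of $G$ (so each node has in-degree and out-degree $k$) and form the $k$-regular bipartite graph $H$ whose one part consists of copies $v^1$, whose other part consists of copies $v^2$, and which has an edge $\{u^1,v^2\}$ for every arc $(u,v)$. Now, rather than picking a single perfect matching of $H$, I would pick two edge-disjoint ones: find a perfect matching $M_1$ of $H$ in linear time by \refth{linear_bp_regular_perfect_matching}, then find a perfect matching $M_2$ of the $(k-1)$-regular bipartite graph $H - M_1$ in linear time (here $k \ge 2$ is used). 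Pulling $M_1 \cup M_2$ back through the correspondence of \reflm{2regular_subgraph}, every node acquires exactly two selected outgoing and two selected incoming arcs, so the corresponding set of edges of $G$ forms a $4$-regular spanning subgraph $G'$; it is simple because $G$ is and the chosen orientation puts at most one arc between any two nodes. This whole step runs in $O(m)$ time.

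Finally I would run the algorithm of \refth{general_algo} on $G'$. Since $G'$ is a simple $4$-regular graph, \refcor{existence_in_regular} applies and $\nu_2^3(G') = |VG'| = n$; hence the maximum triangle-free $2$-matching $x$ that the algorithm produces is perfect. Because $VG' = VG$ and $EG' \subseteq EG$, this same $x$ is a perfect triangle-free $2$-matching of $G$. The cost of this step is $O\bigl(|EG'|\,\sqrt{|VG'|}\bigr) = O(n\sqrt n) = O(n^{3/2})$, so together with the $O(m)$ preprocessing the total is $O(m + n^{3/2})$, as claimed.

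The one place that needs care is the time accounting for the reduction: the naive iterative peeling of $2$-factors is $\Theta(k^2 n)$, so the key move is to notice that two perfect matchings of the \emph{single} bipartite graph $H$ already yield a $4$-regular spanning subgraph, which keeps the reduction at $O(m)$ no matter how large $k = d/2$ is. The remaining verifications — that $M_1 \cup M_2$ pulls back to a simple $4$-regular spanning subgraph, and that a perfect triangle-free $2$-matching of a spanning subgraph is one of $G$ — are routine.
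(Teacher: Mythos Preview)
Your proof is correct and follows essentially the same route as the paper: build a $4$-regular spanning subgraph of $G$ in $O(m)$ time and run \refth{general_algo} on it, relying on \refcor{existence_in_regular} to know the output is perfect. The paper carries out the first step more directly by applying \reflm{2regular_subgraph} twice --- once to $G$ and once to $G$ with the first $2$-factor removed --- and taking the union of the two $2$-factors; this is already $O(m)$, so your $\Theta(k^2 n)$ worry and the single-bipartite-graph workaround are unnecessary (though harmless and equivalent in effect).
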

\begin{proof}
    Consider an undirected $2k$-regular graph $G$.
    Apply \reflm{2regular_subgraph} and construct find a 2-regular spanning subgraph $H_1$ of $G$.
    Next, discard the edges of $H_1$ and reapply \reflm{2regular_subgraph} thus
    obtaining another 2-regular spanning subgraph $H_2$ (here we use that $k \geq 2$).
    Their union $H := (VG, EH_1 \cup EH_2)$
    is a 4-regular spanning subgraph of~$G$.
    By \refcor{existence_in_regular} graph $H$ still contains a perfect triangle-free 2-matching $x$,
    which can be found by the algorithm from \refth{general_algo}.
    It takes $O(m)$ time to construct $H$ and $O(n^{3/2})$ time, totally
    $O(m + n^{3/2})$ time, as claimed.
\end{proof}

\subsection{Odd-degree graphs}

The case $d = 2k+1$ ($k \ge 2$) is more involved. We extract a spanning subgraph
$H$ of~$G$ whose node degrees are 3 and 4. A careful choice of $H$
allows us to ensure that the number of nodes of degree~3 is $O(n/d)$.
Then, by \refth{existence_in_almost_regular} subgraph~$H$ contains a nearly-perfect triangle-free 2-matching.
The latter is found and then augmented to a perfect one with the help
of the algorithm from \cite{CP-80}. More details follow.

\begin{lemma}
\label{lm:nearly_4_regular_subgraph}
    There exists and can be found in linear time a spanning subgraph~$H$
    of graph~$G$ with nodes degrees equal to 3 and 4.
    Moreover, at most $O(n/d)$ nodes in~$H$ are of degree~3.
\end{lemma}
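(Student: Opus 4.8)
Throughout put $d=2k+1$ with $k\ge2$; note that $n$ is even since $G$ is $d$-regular with $d$ odd. The idea is to split $EG$ into two halves by a balanced orientation, realise that split as a bipartite matching problem, and then pull a $2$-factor out of an auxiliary \emph{regular} bipartite graph; a short averaging argument over the colour classes of that graph will be what keeps the number of degree-$3$ nodes down to $O(n/d)$.

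\textbf{Step 1 (balanced orientation).} Pair the nodes of $G$ arbitrarily, add one helper edge per pair to obtain a $(2k+2)$-regular multigraph, take an Eulerian circuit in each connected component, orient all edges along it, and discard the helper edges. This gives an orientation $\vec G$ of $G$ in which every node has in-degree and out-degree in $\{k,k+1\}$; since these two numbers sum to $2k+1$, for each node exactly one of them equals $k$. The step runs in $O(m)$ time.

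\textbf{Step 2 (split, regularise, decompose).} Split each node $v$ into $v^+$ (degree $=$ out-degree of $v$) and $v^-$ (degree $=$ in-degree of $v$), turning each arc $(u,v)$ of $\vec G$ into the edge $\{u^+,v^-\}$; call the resulting bipartite graph $B$. It has $n$ nodes per side, all degrees in $\{k,k+1\}$, exactly $n/2$ nodes of degree $k$ (``light'') per side, and by Step~1 exactly one of $v^+,v^-$ is light for each $v$. Add a perfect matching $M^*$ between the light nodes of the two sides; then $\bar B:=B+M^*$ is $(k+1)$-regular bipartite, every light node meets exactly one edge of $M^*$, every heavy node meets none, and no edge of $M^*$ joins $v^+$ with $v^-$. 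Decompose $\bar B$ into $k+1$ perfect matchings; each edge of $M^*$ lies in exactly one of them, so the sizes of their intersections with $M^*$ sum to $\abs{M^*}=n/2$, whence the two smallest of these sum to at most $\tfrac{2}{k+1}\cdot\tfrac n2=\tfrac{n}{k+1}$. Take the corresponding matchings $N_0,N_1$, set $F:=N_0\cup N_1$ (a $2$-factor of $\bar B$ with $\abs{F\cap M^*}\le\tfrac n{k+1}$), and let $H$ be the spanning subgraph of $G$ whose edges are the $G$-edges underlying $F\setminus M^*$.

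\textbf{Why $H$ works.} Fix a node $v$. Whichever of $v^+,v^-$ is heavy meets no edge of $M^*$, so both of its $F$-edges survive in $H$ and contribute $2$ to $\deg_H(v)$; the other one has $F$-degree $2$, at most one of which is an $M^*$-edge, hence contributes $1$ or $2$. Thus $\deg_H(v)\in\{3,4\}$, with $\deg_H(v)=3$ precisely when the unique $M^*$-edge at the light member of $\{v^+,v^-\}$ belongs to $F$. As every edge of $F\cap M^*$ is incident to the light members of two distinct nodes, $H$ has exactly $2\abs{F\cap M^*}\le\tfrac{2n}{k+1}=O(n/d)$ nodes of degree $3$, which is the assertion of the lemma.

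\textbf{Main obstacle.} Steps~1 and~2 up to and including the construction of $\bar B$ and the read-off of $H$ are plainly linear, so the whole difficulty is to perform the decomposition-and-selection inside $\bar B$ within an $O(m)$ budget: we need two perfect matchings of the $(k+1)$-regular bipartite graph $\bar B$ whose union meets $M^*$ in $O(n/d)$ edges, rather than paying the $O(m\log d)$ a full edge-colouring would cost. Here I would lean on the linear-time matching machinery for regular bipartite graphs of Cole, Ost and Schirra~\cite{COS-01}, repeatedly peeling $2$-factors off $\bar B$ and stopping at the first one whose overlap with $M^*$ does not exceed twice the average $\tfrac n{k+1}$ (such a $2$-factor exists by the averaging bound above). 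Turning ``stop at the first good one'' into a genuine $O(m)$ guarantee — or, alternatively, replacing it by a direct minimum-$M^*$-overlap matching computation that still runs in linear time on this near-regular instance — is the one point that has to be argued carefully.
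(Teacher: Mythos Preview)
Your construction is correct: the balanced orientation, the bipartite split, the regularisation by $M^*$, and the averaging bound all check out, and they do yield a spanning subgraph with degrees in $\{3,4\}$ and at most $2n/(k+1)=O(n/d)$ nodes of degree~3. The gap is exactly the one you flag: extracting the two ``light'' colour classes in linear time. Your proposed fix (peel $2$-factors one by one and stop at the first good one) does not work as stated, since you may have to peel $\Theta(k)$ factors before hitting one below twice the average, and each peel costs $\Theta(nk)$, for $\Theta(nk^2)$ total; and I do not see a linear-time min-overlap $2$-factor routine that rescues it.

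The paper closes this gap by a different, more direct route that you may want to adopt. It never splits into a bipartite graph at all: it adds the $n/2$ virtual edges to obtain a $(2k+2)$-regular multigraph $G'$ and then repeatedly \emph{halves} $G'$ by Euler partition. When the current degree $d'$ is divisible by~4, decompose $EG'$ into edge-disjoint circuits, $2$-colour each circuit alternately, and keep the colour class with fewer virtual edges; this halves both $d'$ and (at worst) the virtual-edge count. When $d'\equiv 2\pmod 4$, strip off a single $2$-regular spanning subgraph via \reflm{2regular_subgraph} to make $d'$ divisible by~4. Stop at $d'=4$. There are $\Theta(\log d)$ rounds, the edge count drops geometrically, so the total work is $O(m)$; and since there are $t=\lfloor\log_2\frac{d+1}{4}\rfloor$ halving rounds, only $O(n/2^t)=O(n/d)$ virtual edges survive, giving the degree-3 bound after deletion. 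Note that this is precisely the algorithmic incarnation of your averaging argument: rather than computing a full $(k+1)$-edge-colouring and then picking two classes, one halves recursively, always keeping the sparser half --- and the same trick, applied to your $\bar B$, would fix your proof too.
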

\begin{proof}
    Let us partition the nodes of $G$ into pairs (in an arbitrary way) and
    add $n/2$ \emph{virtual} edges connecting these pairs. The resulting
    graph $G'$ is $2k+2$-regular. (Note that $G'$ may contain multiple
    parallel edges.)

    Our task is find a 4-regular spanning subgraph $H'$ of $G'$ containing at most $O(n/d)$ virtual edges.
    Once this subgraph is found, the auxiliary edges are dropped creating $O(n/d)$ nodes of degree~3
    (recall that each node of $G$ is incident to exactly one virtual edge).

    Subgraph~$H'$ is constructed by repeatedly pruning graph $G'$.
    During this process graph $G'$ remains $d'$-regular for some even $d'$
    (initially $d' := d + 1$).

    At each pruning step we first examine $d'$. Two cases are possible.
    Suppose $d'$ is divisible by~4, then a \emph{large} step is executed.
    The graph~$G'$ is decomposed into a collection of edge-disjoint circuits.
    In each circuit, every second edge is marked as \emph{red} while others are marked
    as \emph{blue}.
    This red-blue coloring partitions $G'$ into a pair of spanning $d'/2$-regular subgraphs.
    We replace $G'$ by the one containing the smallest number of virtual edges.
    The second case (which leads to a \emph{small} step) applies if $d'$ is not divisible by $4$.
    Then, with the help of \reflm{2regular_subgraph} a 2-regular spanning subgraph is found in $G'$.
    The edges of this subgraph are removed from $G'$, so $d'$ decreases by~2.

    The process stops when $d'$ reaches~4 yielding the desired subgraph~$H'$.
    Totally, there are $O(\log d)$ large (and hence also small) steps
    each taking time proportional to the number of remaining edges.
    The latter decreases exponentially, hence the total time to construct $H'$ is linear.

    It remains to bound the number of virtual edges in $H'$. There are exactly
    $t := \lfloor \log_2 (d+1)/4 \rfloor$ large steps performed by the algorithm.
    Each of the latter decreases the number of virtual edges in the current subgraph
    by at least a factor of~2. Hence, at the end there are $O(n/2^t) = O(n/d)$ virtual edges
    in $H'$, as required.
\end{proof}

\begin{theorem}
    A perfect triangle-free 2-matching in a $d$-regular graph ($d = 2k + 1$, $k \ge 2$)
    can be found in $O(n^2)$ time.
\end{theorem}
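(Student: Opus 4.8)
The plan is to combine the two preceding lemmas with a short, \emph{warm-started} run of the
    augmenting algorithm of Cornu\'ejols and Pulleyblank~\cite{CP-80}. The point is that augmenting
    from scratch in~$G$ would need $\Theta(n)$ steps of cost $O(m) = O(nd)$ each, which is too slow;
    instead I would start the augmentation from a triangle-free 2-matching that is already within
    $O(n/d)$ of being perfect, so that only $O(n/d)$ steps remain.

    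First I would invoke \reflm{nearly_4_regular_subgraph} to extract, in linear time, a spanning
    subgraph~$H$ of~$G$ all of whose node degrees are $3$ or $4$ and in which the number~$q$ of
    degree-$3$ nodes is $O(n/d)$. Since $H$ has $O(n)$ edges, the algorithm of \refth{general_algo}
    applied to~$H$ returns a maximum triangle-free 2-matching~$x_0$ of~$H$ in $O(n^{3/2})$ time.
    Applying \refth{existence_in_almost_regular} to~$H$ (with the role of the regularity degree there
    played by~$4$) gives $\val{x_0} = \nu_2^3(H) \ge n - q/4 = n - O(n/d)$. As $H$ is a spanning
    subgraph of~$G$, this $x_0$ is also a triangle-free 2-matching of~$G$, short of being perfect by
    only $O(n/d)$.

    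Next I would run the augmenting algorithm of~\cite{CP-80} on~$G$ itself, but initialized with
    $x := x_0$ rather than $x := 0$; nothing in that algorithm prevents this, since each step uses
    only the current 2-matching as input and either strictly increases $\val{x}$ or certifies
    optimality. As recalled in \refsec{intro}, one augmentation step can be implemented in $O(m)$
    time, and here $m = dn/2 = O(nd)$. By \refcor{existence_in_regular} the maximum size of a
    triangle-free 2-matching in the $d$-regular graph~$G$ equals~$n$, so the algorithm reaches a
    perfect triangle-free 2-matching after at most $n - \val{x_0} = O(n/d)$ augmentation steps.
    Hence the augmentation phase costs $O(n/d) \cdot O(nd) = O(n^2)$, and adding the linear time for
    producing~$H$ and the $O(n^{3/2})$ time for producing~$x_0$ keeps the overall running time at
    $O(n^2)$.

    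The substantive work is already packed into \reflm{nearly_4_regular_subgraph} and
    \refth{existence_in_almost_regular}; in the theorem itself the only thing that really needs care
    is the accounting, together with the (routine) observation that the Cornu\'ejols--Pulleyblank
    procedure is correct when started from an arbitrary triangle-free 2-matching. The crucial balance
    to watch is that \reflm{nearly_4_regular_subgraph} makes~$H$ \emph{quantitatively} close to
    $4$-regular, with $q = O(n/d)$ rather than merely $q = o(n)$; this is exactly what makes
    \refth{existence_in_almost_regular} leave a deficiency of only $O(n/d)$, and it is the
    cancellation of this factor $1/d$ against the $m = \Theta(nd)$ cost of a single augmentation in
    the dense graph~$G$ that collapses the total to $O(n^2)$.
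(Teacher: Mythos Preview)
Your proposal is correct and follows essentially the same approach as the paper: extract the nearly $4$-regular spanning subgraph~$H$ via \reflm{nearly_4_regular_subgraph}, compute a maximum triangle-free 2-matching in~$H$ in $O(n^{3/2})$ time, invoke \refth{existence_in_almost_regular} to bound the deficiency by $O(n/d)$, and then finish with $O(n/d)$ augmentation steps of the Cornu\'ejols--Pulleyblank algorithm in~$G$ at $O(m)=O(nd)$ each. Your additional commentary about the $1/d$ versus $d$ cancellation and about warm-starting the augmenting algorithm is sound and makes explicit what the paper leaves implicit.
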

\begin{proof}
    We apply \reflm{nearly_4_regular_subgraph} and construct a subgraph~$H$ in $O(m)$ time.
    Next, a maximum triangle-free 2-matching $x$ is found in $H$,
    which takes $O(\abs{EH} \cdot \abs{VH}^{1/2}) = O(n^{3/2})$ time.
    By \refth{existence_in_almost_regular} the latter 2-matching obeys $n - \val{x} = O(n/d)$.
    To turn $x$ into a perfect triangle-free 2-matching in $G$ we apply the
    algorithm from \cite{CP-80} and perform $O(n/d)$ augmentation steps.
    Each step takes $O(m)$ time, so totally the desired perfect triangle-free 2-matching
    is constructed in $O(m + n^{3/2} + mn/d) = O(n^2)$ time.
\end{proof}

\subsection*{Acknowledgements}

The authors are thankful to Andrew Stankevich for fruitful suggestions
and helpful discussions.

\nocite{*}
\bibliographystyle{alpha}
\bibliography{main}

\end{document}